\documentclass[12pt]{article}
\usepackage[utf8]{inputenc}

\usepackage{amsmath,mathtools,amsthm,amssymb, xcolor}
\usepackage{thm-restate}
\usepackage{tikz}
\usepackage{enumitem}
\usetikzlibrary{calc}
\usepackage{fullpage}
\usepackage{makecell}
\usepackage{stmaryrd}

\usepackage[numbers]{natbib}
\newcommand{\vc}[1]{\ensuremath{\vcenter{\hbox{#1}}}}
\tikzset{flag_pic/.style={scale=1}}  %  Change the scale to change all figures
\tikzset{unlabeled_vertex/.style={inner sep=1.7pt, outer sep=0pt, circle, fill}}
\tikzset{labeled_vertex/.style={inner sep=3pt, outer sep=0pt, rectangle, fill=white, draw=black}}
\tikzset{edge_color0/.style={color=black,line width=1.2pt,opacity=0.5,dashed}}
\tikzset{edge_color1/.style={color=red,  line width=1.2pt,opacity=0}} 
\tikzset{edge_color2/.style={color=blue, line width=1.2pt,opacity=1, dashed}}
\tikzset{edge_color3/.style={color=green!80!black,line width=1.2pt}}
\tikzset{edge_color4/.style={color=orange, line width=1.2pt}}
\tikzset{edge_color5/.style={color=red,  line width=1.2pt,dotted}}
\tikzset{edge_color6/.style={color=blue, line width=1.2pt,dotted}}
\tikzset{edge_color7/.style={color=green, line width=1.2pt,dotted}}
\tikzset{edge_color8/.style={color=gray, line width=1.2pt}}
\tikzset{edge_color9/.style={color=gray, dotted, line width=1.2pt}}
\tikzset{edge_color10/.style={color=gray, dashed, line width=1.2pt}}
\tikzset{edge_color11/.style={color=pink, dashed, line width=1.2pt}}
\tikzset{edge_f/.style={-latex}}
\tikzset{edge_b/.style={latex-}}
\tikzset{edge_color1f/.style={edge_f,edge_color1}}
\tikzset{edge_color1b/.style={edge_b,edge_color1}}
\tikzset{edge_color2f/.style={edge_f,edge_color2}}
\tikzset{edge_color2b/.style={edge_b,edge_color2}}
\tikzset{edge_color3f/.style={edge_f,edge_color3}}
\tikzset{edge_color3b/.style={edge_b,edge_color3}}
\tikzset{edge_color4f/.style={edge_f,edge_color4}}
\tikzset{edge_color4b/.style={edge_b,edge_color4}}
\tikzset{edge_color5f/.style={edge_f,edge_color5}}
\tikzset{edge_color5b/.style={edge_b,edge_color5}}
\tikzset{edge_color6f/.style={edge_f,edge_color6}}
\tikzset{edge_color6b/.style={edge_b,edge_color6}}
\tikzset{edge_color7f/.style={edge_f,edge_color7}}
\tikzset{edge_color7b/.style={edge_b,edge_color7}}
\tikzset{edge_color8f/.style={edge_f,edge_color8}}
\tikzset{edge_color8b/.style={edge_b,edge_color8}}
\tikzset{edge_colorroot/.style={color=red, line width=1.7pt}}
\tikzset{edge_thin/.style={color=black}}
\tikzset{edge_hidden/.style={color=black,dotted,opacity=0}}
\tikzset{vertex/.style={inner sep=1.7pt, outer sep=0pt, circle}}
\tikzset{vertex_color0/.style={inner sep=1.7pt, outer sep=0pt, draw, circle, fill=white}}
\tikzset{vertex_color1/.style={inner sep=1.7pt, outer sep=0pt, draw, circle, fill=red!30!white}}
\tikzset{vertex_color2/.style={inner sep=1.7pt, outer sep=0pt, draw, circle, fill=blue!30!white}}
\tikzset{vertex_color3/.style={inner sep=1.7pt, outer sep=0pt, draw, circle, fill=green}}
\tikzset{vertex_color4/.style={inner sep=1.7pt, outer sep=0pt, draw, circle, fill=yellow}}
\tikzset{vertex_color5/.style={inner sep=1.7pt, outer sep=0pt, draw, circle, fill=gray!30!white}}
\tikzset{vertex_color6/.style={inner sep=1.7pt, outer sep=0pt, draw, circle, fill=gray,label=below:{$6$}}}
\tikzset{vertex_color7/.style={inner sep=1.7pt, outer sep=0pt, draw, circle, fill=gray,label=below:{$7$}}}
\tikzset{vertex_color8/.style={inner sep=1.7pt, outer sep=0pt, draw, circle, fill=gray,label=below:{$8$}}}
\tikzset{vertex_color9/.style={inner sep=1.7pt, outer sep=0pt, draw, circle, fill=gray,label=below:{$9$}}}
\tikzset{vertex_color10/.style={inner sep=1.7pt, outer sep=0pt, draw, circle, fill=gray,label=below:{$10$}}}
\tikzset{vertex_color11/.style={inner sep=1.7pt, outer sep=0pt, draw, circle, fill=gray,label=below:{$11$}}}
\tikzset{vertex_color12/.style={inner sep=1.7pt, outer sep=0pt, draw, circle, fill=gray,label=below:{$12$}}}
\tikzset{vertex_color13/.style={inner sep=1.7pt, outer sep=0pt, draw, circle, fill=gray,label=below:{$13$}}}
\tikzset{vertex_color14/.style={inner sep=1.7pt, outer sep=0pt, draw, circle, fill=gray,label=below:{$14$}}}
\tikzset{labeled_vertex_color0/.style={inner sep=3pt, outer sep=0pt, draw, rectangle, fill=white}}
\tikzset{labeled_vertex_color1/.style={inner sep=3pt, outer sep=0pt, draw, rectangle, fill=red!30!white}}
\tikzset{labeled_vertex_color2/.style={inner sep=3pt, outer sep=0pt, draw, rectangle, fill=blue!30!white}}
\tikzset{labeled_vertex_color3/.style={inner sep=3pt, outer sep=0pt, draw, rectangle, fill=green}}
\tikzset{labeled_vertex_color4/.style={inner sep=3pt, outer sep=0pt, draw, rectangle, fill=yellow}}
\tikzset{labeled_vertex_color5/.style={inner sep=3pt, outer sep=0pt, draw, rectangle, fill=gray!30!white}}
\tikzset{labeled_vertex_color6/.style={inner sep=3pt, outer sep=0pt, draw, rectangle, fill=gray,label=below:{$6$}}}
\tikzset{labeled_vertex_color7/.style={inner sep=3pt, outer sep=0pt, draw, rectangle, fill=gray,label=below:{$7$}}}
\tikzset{labeled_vertex_color8/.style={inner sep=3pt, outer sep=0pt, draw, rectangle, fill=gray,label=below:{$8$}}}
\tikzset{labeled_vertex_color9/.style={inner sep=3pt, outer sep=0pt, draw, rectangle, fill=gray,label=below:{$9$}}}
\tikzset{text_color0/.style={color=black}}
\tikzset{text_color1/.style={color=red}}
\tikzset{text_color2/.style={color=blue}}
\tikzset{text_color3/.style={color=green!70!black}}
\tikzset{text_color4/.style={color=orange}}
\tikzset{text_color5/.style={color=gray}}

\def\outercycle#1#2{
\pgfmathtruncatemacro{\plusone}{#1+1}
\pgfmathtruncatemacro{\zeroshift}{270 - (#2-1)*360/#1/2 }
    \draw  \foreach \x in {0,1,...,#1}{(\zeroshift+\x*360/#1:1) node[vertex](x\x){}};
}

\def\labelvertex#1{\pgfmathtruncatemacro{\vertexlabel}{#1+1 } \draw (x#1) node{\color{black}\tiny\vertexlabel}; }
\usepackage{listofitems}
\tikzset{vertex_u/.style={unlabeled_vertex}}
\tikzset{vertex_l/.style={labeled_vertex}}
\newcommand{\Fnv}[2]{ %   #of labeled vertices  #vertex_ID   draws the vertex and adds label if applicable
\ifnum#2<#1 \draw (x#2) node[vertex_l]{}; \labelvertex{#2}  % labeled vertex
\else  \draw (x#2) node[vertex_u]{}; \fi % unlabeled vertex
}
\newcommand{\Fne}[3]{%   edge_color   i   j     draws an edge of edge_color between xi and  xj
\draw[edge_color#3] (x#1)--(x#2); 
}
\newcounter{Fneid} % tmp counter
\newcommand{\Fe}[3]{%            #vertices   #labeled   {color color color ...  } adjacency matrix
\ifnum#1=1% If we have just one vertex, draw it and don't worry about the rest
  \vc{\begin{tikzpicture}[scale=0.4]\outercycle{1}{2}
  \Fnv{#2}{0}
  \end{tikzpicture}}
\else
\setsepchar{ }% edge labels are separated by spaces
\readlist\elabel{#3}% parse edge labels to \elabel  
\pgfmathtruncatemacro{\vertexloop}{#1-1}% when looping vertices, we need -1 
\pgfmathtruncatemacro{\vertexloopi}{#1-2}% and -2 in the loop
\pgfmathtruncatemacro{\expectededges}{#1*(#1-1)/2}%   how many edges we should have?
\ifnum\elabellen=\expectededges% 
\def\cycleshift{2}% Check how to rotate the image, default is 2, but sometimes we can change it
\ifnum#1=2\def\cycleshift{1}\fi%          for 2 vertices it is better to shift,
\ifnum#1=3\ifnum#2=1\def\cycleshift{1}\fi\fi%   for 3 vertices with 1 labeled we rotate them
\def\Fnscale{0.4}\ifnum#1>4\def\Fnscale{0.45}\fi\ifnum#1>5\def\Fnscale{0.55}\fi\ifnum#1>7\def\Fnscale{0.65}\fi% scale the figure for larger graphs to make it readable
\vc{\begin{tikzpicture}[scale=\Fnscale]
          \outercycle{#1}{\cycleshift}% Define coordinates of vertices 
          \setcounter{Fneid}{1}         % we need to keep track of index of edge in  
          \foreach\i in {0,...,\vertexloopi}{   % loop over \i, \j pairs of vertices
          \pgfmathtruncatemacro{\jfrom}{\i+1}
          \foreach\j in {\jfrom,...,\vertexloop}{
          \edef\eID{\arabic{Fneid}}
          \edef\eij{\elabel[\eID]}         % get the label of the edge i,j
            \Fne\i\j{\eij}              % draw the edge i,j
	    \stepcounter{Fneid}
          }}
          \foreach\i in {0,...,\vertexloop}{\Fnv{#2}{\i}  % draw all vertices
          }
          \end{tikzpicture}}%
\else% If the number of edges does not match, print an error message
   #1 vertices need \expectededges{} edges but got \elabellen edges.
\fi
\fi
}
\usepackage{listofitems}
\tikzset{vertex_u/.style={unlabeled_vertex}}
\tikzset{vertex_l/.style={labeled_vertex}}
\newcommand{\FnvD}[3]{ %   #of labeled vertices  #vertex_ID   draws the vertex and adds label if applicable
\ifnum#2<#1 \draw (x#2) node[labeled_vertex_color#3]{}; \labelvertex{#2}  % labeled vertex
\else  \draw (x#2) node[vertex_color#3]{}; \fi % unlabeled vertex
}

\newcounter{Fvneid} % tmp counter
\newcounter{Fvnvid} % tmp counter

\newcommand{\Fve}[4]{%            #vertices   #labeled   {color color color ...  } adjacency matrix
\ifnum#1=1% If we have just one vertex, draw it and don't worry about the rest
  \vc{\begin{tikzpicture}[scale=0.4]\outercycle{1}{2}
  \FnvD{#2}{0}{#3}
  \end{tikzpicture}}
\else
\setsepchar{ }% edge labels are separated by spaces
\readlist\vlabel{#3}% parse edge labels to \vlabel  
\readlist\elabel{#4}% parse edge labels to \elabel  
\pgfmathtruncatemacro{\vertexloop}{#1-1}% when looping vertices, we need -1 
\pgfmathtruncatemacro{\vertexloopi}{#1-2}% and -2 in the loop
\pgfmathtruncatemacro{\expectededges}{#1*(#1-1)/2}%   how many edges we should have?
\ifnum\vlabellen=#1% 
\ifnum\elabellen=\expectededges% 
\def\cycleshift{2}% Check how to rotate the image, default is 2, but sometimes we can change it
\ifnum#1=2\def\cycleshift{1}\fi%          for 2 vertices it is better to shift,
\ifnum#1=3\ifnum#2=1\def\cycleshift{1}\fi\fi%   for 3 vertices with 1 labeled we rotate them
\def\Fnscale{0.4}\ifnum#1>4\def\Fnscale{0.45}\fi\ifnum#1>5\def\Fnscale{0.55}\fi\ifnum#1>7\def\Fnscale{0.65}\fi% scale the figure for larger graphs to make it readable
\vc{\begin{tikzpicture}[scale=\Fnscale]
          \outercycle{#1}{\cycleshift}% Define coordinates of vertices 
          \setcounter{Fvneid}{1}         % we need to keep track of index of edge in  
          \foreach\i in {0,...,\vertexloopi}{   % loop over \i, \j pairs of vertices
          \pgfmathtruncatemacro{\jfrom}{\i+1}
          \foreach\j in {\jfrom,...,\vertexloop}{
          \edef\eID{\arabic{Fvneid}}%
          \edef\eij{\elabel[\eID]} % get the label of the edge i,j
            \Fne\i\j{\eij}%  draw the edge i,j
	    \stepcounter{Fvneid}
          }}
          \foreach\i in {0,...,\vertexloop}{%
          \setcounter{Fvnvid}{\i}
          \stepcounter{Fvnvid}
          \edef\vID{\arabic{Fvnvid}}%
          \edef\vlabeli{\vlabel[\vID]}%
          \FnvD{#2}{\i}{\vlabeli} % draw all vertices
          }%
          \end{tikzpicture}}%
\else% If the number of edges does not match, print an error message
   \text{ #1 vertices need \expectededges{} edges but got \elabellen{} edges. }
\fi
\else% If the number of edges does not match, print an error message
 \text{ #1 vertices need #1 vertex labels but got \vlabellen{} labels. }
\fi
\fi
}
\usepackage{ifthen}

\tikzset{vertex_u/.style={unlabeled_vertex}}
\tikzset{vertex_l/.style={labeled_vertex}}

\newcommand{\Fuu}[1]{
\,\vc{\begin{tikzpicture}[scale=0.3]\outercycle{2}{1}
\draw[edge_color#1] (x0)--(x1);
\draw (x0) node[unlabeled_vertex]{};\draw (x1) node[unlabeled_vertex]{};
\end{tikzpicture}}
\,
}

\newcommand{\Fluu}[3]{
\vc{\begin{tikzpicture}[scale=0.4]\outercycle{3}{1}
\draw[edge_color#1] (x0)--(x1);\draw[edge_color#2] (x0)--(x2);  \draw[edge_color#3] (x1)--(x2);
\draw (x0) node[labeled_vertex]{};\draw (x1) node[unlabeled_vertex]{};\draw (x2) node[unlabeled_vertex]{};
\labelvertex0
\end{tikzpicture}}}

\newcommand{\Flll}[3]{
\vc{\begin{tikzpicture}[scale=0.4]\outercycle{3}{2}
\draw[edge_color#1] (x0)--(x1);\draw[edge_color#2] (x0)--(x2);  \draw[edge_color#3] (x1)--(x2);
\draw (x0) node[labeled_vertex]{};\draw (x1) node[labeled_vertex]{};\draw (x2) node[labeled_vertex]{};
\labelvertex0
\labelvertex1
\labelvertex2
\end{tikzpicture}}}

\newcommand{\FfourEdges}[6]{
\draw[edge_color#1] (x0)--(x1);\draw[edge_color#2] (x0)--(x2);\draw[edge_color#3] (x0)--(x3);  \draw[edge_color#4] (x1)--(x2);\draw[edge_color#5] (x1)--(x3);  \draw[edge_color#6] (x2)--(x3);
}
\newcommand{\Ffour}[5]{
\vc{\begin{tikzpicture}[scale=0.4]\outercycle{4}{2}
\FfourEdges#5
\draw (x0) node[vertex_#1]{};\draw (x1) node[vertex_#2]{};\draw (x2) node[vertex_#3]{};\draw (x3) node[vertex_#4]{};
\ifthenelse{\equal{#1}{l}}{\labelvertex{0}}{}%
\ifthenelse{\equal{#2}{l}}{\labelvertex{1}}{}%
\ifthenelse{\equal{#3}{l}}{\labelvertex{2}}{}%
\ifthenelse{\equal{#4}{l}}{\labelvertex{3}}{}%
\end{tikzpicture}}
}

\newcommand{\Flluu}[6]{\Ffour{l}{l}{u}{u}{#1#2#3#4#5#6}}

\usepackage[pdftex,hypertexnames=false,linktocpage=true]{hyperref}
\hypersetup{colorlinks=true,linkcolor=black,anchorcolor=blue,citecolor=black,filecolor=blue,urlcolor=black,bookmarksnumbered=true,pdfview=FitB}
\usepackage{cleveref}

\newtheorem{theorem}{Theorem}[section]
\newtheorem{corollary}[theorem]{Corollary}
\newtheorem{lemma}[theorem]{Lemma}

\newtheorem{proposition}[theorem]{Proposition}
\newtheorem{problem}[theorem]{Problem}
\newtheorem{question}[theorem]{Question}
\newtheorem{conjecture}[theorem]{Conjecture}

\theoremstyle{definition}
\newtheorem{definition}{Definition}[section]

\newcommand{\oururl}{\url{https://lidicky.name/pub/rtcuts/}}

\tikzset{
vtx/.style={inner sep=1.7pt, outer sep=0pt, circle, fill=black,draw=black}, % style of a vertex
}

\tikzset{
xvtx/.style={inner sep=1.7pt, outer sep=0pt, circle,draw=black}, % style of a vertex
}

\title{Bipartite cuts in Ramsey-Tur\'an style}

\author{
{J\'ozsef Balogh\thanks{Department of Mathematics, University of Illinois Urbana-Champaign, Urbana, IL, USA, and Extremal Combinatorics and Probability Group (ECOPRO), Institute for Basic Science (IBS), Daejeon, South Korea. Email: \texttt{jobal@illinois.edu}. Partially supported by NSF grants RTG DMS-1937241, FRG DMS-2152488, UIUC  Campus Research Board Award RB26026, the  Simons Collaboration grant [SFI-MPS-TSM-00013107, JB], and the Institute for Basic Science (IBS-R029-C4).}
}
\and
Ce Chen\thanks{
Department of Mathematics, University of Illinois Urbana-Champaign, Urbana, IL, USA. E-mail: {\tt cechen4@illinois.edu}. Partially supported by NSF grants RTG DMS-1937241,  FRG DMS-2152488, and UIUC Campus Research Board RB 24012.
}
\and
Bernard Lidick\'y\thanks{
Department of Mathematics, Iowa State University, Ames, IA, USA. E-mail: {\tt lidicky@iastate.edu}. Research of this author is supported in part by NSF FRG DMS-2152490, the Simons Foundation TSM-00013439 and Scott Hanna Professorship.
}
}

\date{}

\begin{document}

\maketitle

\begin{abstract}
We prove that every $K_5$-free $n$-vertex graph with sublinear independence number can be made bipartite by removing at most $n^2(1/18+o(1))$ edges, where the constant $1/18$ is best possible.
The proof method is related to extensions of Tur\'an Theorem in edge-weighted settings, and part of the proof uses flag algebra.
\end{abstract}

\emph{
Keywords: Ramsey-Tur\'an, weighted Tur\'an, weighted graphs, flag algebras.
}

\section{Introduction}
How many edges are needed to be removed from a triangle-free graph on $n$ vertices to make it bipartite?
Erd\H{o}s~\cite{MR0409246} asked this question and conjectured that $n^2/25$ edges would always be sufficient. This would be best possible as the balanced blow-up of $C_5$ with class sizes $n/5$ needs at least $n^2/25$ edges to be removed to make it bipartite.
For a graph $H$ and an integer $n$, denote by $D_2^\#(n, H)$ 
the minimum integer $d$ such that every $n$-vertex $H$-free graph can be made bipartite by removing at most $d$ edges.
%the minimum number of edges which have to be removed to make any $n$-vertex $H$-free graph bipartite. 
Define
\[
D_2(H)\coloneqq\limsup_{n\to\infty}\frac{D_2^\#(n, H)}{\binom{n}{2}}.
\]
The problem of determining $D_2(H)$ and its variants has attracted considerable attention in recent years.
For $H=K_3$, a closely related topic is Erd\H{o}s's Sparse Half Conjecture~\cite{MR0409246,erdHos1997some}.
Razborov~\cite{Razborov2022} completely proved the conjecture for graphs of girth at least $5$, for graphs with independence number at least $2n/5$ and for strongly regular graphs.
Balogh, Clemen, and Lidick\'y~\cite{Balogh2024} systematically investigated several related partition problems for triangle-free graphs and proposed a number of conjectures and open problems. 
For the analogous setting of $K_4$-free graphs, Liu and Ma~\cite{Liu2021} proved the existence of sparse halves for regular $K_4$-free graphs, and Reiher~\cite{Reiher2022} later settled the problem for every $K_4$-free graph.
Very recently, Balogh, Buczek, Grzesik, and Kuc~\cite{balogh2026} solved a balanced  bipartite variant of the problem for $K_4$-free graphs.

Solving a conjecture of Erd\H os~\cite{erdos1988makebi}, Sudakov~\cite{MR2359832} proved the following theorem.
\begin{theorem}[Sudakov~\cite{MR2359832}]\label{thm:sudakov}
Every $n$-vertex $K_4$-free graph can be made bipartite by removing at most $n^2/9$ edges, i.e., $D_2^\#(K_4)\leq n^2/9$. 
\end{theorem}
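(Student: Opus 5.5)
We need to show that every $n$-vertex $K_4$-free graph $G$ has a bipartite subgraph with at least $e(G)-n^2/9$ edges. Equivalently, there is a cut $(A,B)$ such that the number of edges inside $A$ and inside $B$ is at most $n^2/9$. The plan is to produce two candidate partitions: the neighbourhood partition, which works when $G$ is dense, and the uniformly random partition, which works when $G$ is sparse. We then average or compare the two.

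First, if $e(G)\le 2n^2/9$, a uniformly random bipartition keeps each edge inside a part with probability $1/2$. Some cut therefore has at most $e(G)/2\le n^2/9$ non-crossing edges, so we may assume $e(G)>2n^2/9$.

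For the dense case, use vertex neighbourhoods. For a vertex $v$, put $A=N(v)$ and $B=V\setminus N(v)$. Since $G$ is $K_4$-free, $G[N(v)]$ is triangle-free, so by Mantel $e(G[N(v)])\le d(v)^2/4$. Edges inside $B$ number at most $e(G)-\sum_{u\in N(v)}d(u)+e(G[N(v)])$, because every edge touching $N(v)$ is counted by the degree sum, with edges inside $N(v)$ counted twice. So the number of non-crossing edges is at most
\begin{equation*}
e(G)-\sum_{u\in N(v)}d(u)+2e(G[N(v)]).
\end{equation*}
Averaging over $v$ with weights $d(v)$, or uniformly, and using $\sum_v\sum_{u\sim v}d(u)=\sum_u d(u)^2$ together with Cauchy--Schwarz, bounds the middle term by $4e^2/n$ on average. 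The last term needs the count of triangles: $\sum_v e(G[N(v)])=3t(G)$.

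The main obstacle is controlling the triangle term, and this is where I would work hardest. Mantel alone gives roughly $\frac{1}{n}\sum_v d(v)^2/2$, which is too weak. To bound $t(G)$ for $K_4$-free $G$ in terms of $e$ and $n$, I would use a Moon--Moser/Fisher type inequality, or alternatively choose $v$ of maximum degree. I would also mix the neighbourhood cut with a random refinement: take $A=N(v)$, put each vertex of $B$ independently into $A$ with a suitable probability, and optimise over $d(v)$ and $e$. The target is the extremal configuration, the balanced complete tripartite graph, where $d=2n/3$, $e=n^2/3$, and exactly $n^2/9$ edges must be deleted. All inequalities should be tight there, which guides the choice of weights. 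I expect the final step to be a calculus optimisation showing that
\begin{equation*}
\min\bigl(e/2,\ \text{neighbourhood bound}\bigr)\le n^2/9
\end{equation*}
for all feasible parameters. If the plain Mantel/Cauchy--Schwarz bound fails in an intermediate range of $e$, I would bring in the triangle-supersaturation bound for $K_4$-free graphs (each edge lies in few triangles relative to degrees) to close the gap.
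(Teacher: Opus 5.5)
There is a genuine gap: the proposal handles only the easy range, and the range it leaves open is the whole content of the theorem.

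\textbf{What is correct.} The random bipartition disposes of $e\le 2n^2/9$. Your count $e-\sum_{u\in N(v)}d(u)+2e(G[N(v)])$ of uncut edges for the neighbourhood partition is exact.

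\textbf{Where the argument stops.} Put $x=e/n^2$. Averaging over $v$ gives $\frac1n\sum_u d(u)^2\ge 4e^2/n^2$ for the middle term (not $4e^2/n$). Combined with Mantel in each $N(v)$, the average number of uncut edges is at most $e-\frac{1}{2n}\sum_v d(v)^2\le e-2e^2/n^2$. The inequality $x-2x^2\le 1/9$ is equivalent to $(3x-1)(6x-1)\ge 0$, i.e.\ $x\le 1/6$ or $x\ge 1/3$. Tur\'an's theorem gives $x\le 1/3$ for $K_4$-free graphs, so outside the random-cut range your argument covers only the single value $x=1/3$. The interval $2/9<x<1/3$ is left to ``I expect'' and ``I would bring in''. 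Lemma~\ref{lemma:heavy} of the paper carries out essentially this neighbourhood-cut averaging in the weighted setting, and it too yields only a restriction on the total weight, not the final bound. The paper does not reprove the statement; it quotes Sudakov's theorem from~\cite{MR2359832}.

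\textbf{Why the proposed tools do not close the gap.} Moon--Moser-type supersaturation gives \emph{lower} bounds on the number $t$ of triangles. But $t$ enters your averaged bound as $+6t/n$, because edges inside $N(v)$ are uncut, so a lower bound points the wrong way. The sharp \emph{upper} bound for $K_4$-free graphs, $t\le (e/3)^{3/2}$, turns your bound into $n^2\bigl(x-4x^2+6(x/3)^{3/2}\bigr)$. This function is concave on $[2/9,1/3]$. It equals $n^2/9$ at $x=1/3$ and is about $0.146\,n^2$ at $x=2/9$, so it exceeds $n^2/9$ on all of $[2/9,1/3)$. Hence the calculus optimisation of $\min(e/2,\text{neighbourhood bound})$ over $(n,e,t)$ that you anticipate cannot succeed with these inequalities. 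The maximum-degree choice and the random refinement of $V\setminus N(v)$ are only named, with no reason given why they would help.

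\textbf{What is missing.} You need an ingredient that exploits $K_4$-freeness beyond Mantel in each neighbourhood. One example is a family of cuts that \emph{gains} from triangles. For an edge $xy$, the set $N(x)\cap N(y)$ is independent, so separating it leaves only $e-\sum_{w\in N(x)\cap N(y)}d(w)$ edges uncut. Such cuts would then have to be balanced carefully against the neighbourhood cuts. As written, the proposal is a correct reduction to the hard case, not a proof.
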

This theorem is best possible since from the balanced complete 3-partite graph on $n$ vertices, at least $n^2/9$ edges are needed to be removed to make it bipartite.
Sudakov~\cite{MR2359832} also conjectured a generalization of his result, stating that the balanced complete $(r-1)$-partite graph on $n$ vertices is the worst graph for every $r\geq 5$.
\begin{conjecture}[Sudakov~\cite{MR2359832}]\label{sudakov}
    For all $r \geq 5$
    \[
D_2^\#(K_r) = \begin{cases}
  \frac{r-3}{4(r-1)}n^2  & \quad\quad\quad \text{for odd } r,\\[5pt] 
  \frac{(r-2)^2}{4(r-1)^2}n^2  & \quad\quad\quad \text{for even } r.\\  
\end{cases}
    \]
\end{conjecture}

In particular, Conjecture~\ref{sudakov} asserts that $D_2(K_5)=1/4$, i.e., every $K_5$-free graph on $n$ vertices can be made bipartite by removing at most $n^2/8$ edges. The case  $r=6$ was verified by Hu, Lidick\'y, Martins, Norin, and Volec~\cite{flagmulti} using flag algebras. 
Variants of weighted Tur\'an-type problems were studied previously, see~\cite{balogh2025weighted} and \cite{balogh2024ramsey}.
We propose a weighted extension of this Tur\'an-type problem as follows.

\begin{question}\label{wk5}
Let $a\ge b\ge 2$ be integers. Let $R$ be a $K_a$-free weighted graph on $n$ vertices, where the weight of every edge in a $K_b$ is $1/2$, the weight of other edges is $1$, and the weight of non-edges is $0$.
What is the minimum weight of a cut in $R$, i.e., what is the minimum total weight of edges that are needed to be removed to make $R$ bipartite? 
\end{question}

The answer to~\Cref{wk5} 
is $n^2/18$ if $a=4$ and $b=3$.
It is stated as Theorem~\ref{wk5up}, which implies our main result,~\Cref{k5}.

\begin{theorem}\label{wk5up}
    Let $G$ be a $K_4$-free weighted graph on $n$ vertices, where the weight of every edge in a triangle is $1/2$, the weight of other edges is $1$, and the weight of non-edges is $0$.
   The graph $G$ can be made bipartite by removing edges of total weight at most $n^2/18$.   
\end{theorem}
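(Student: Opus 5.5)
We want: in a $K_4$-free graph $G$ with edges in triangles weighted $1/2$ and other edges weighted $1$, there is a cut whose uncut weight is at most $n^2/18$. The extremal example is the balanced complete tripartite graph. There every edge lies in a triangle, so has weight $1/2$, and the best bipartition loses $n^2/9\cdot 1/2=n^2/18$. This suggests reducing to Sudakov's \Cref{thm:sudakov} plus a gain argument, averaging over a well-chosen random family of cuts.

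The first step is to split $E(G)=T\cup S$, where $T$ is the set of edges lying in some triangle (weight $1/2$) and $S$ is the set of triangle-free edges (weight $1$). For any bipartition $(A,B)$ the cost is $\tfrac12 e_T(A,B\text{-internal})+e_S(\text{internal})$. Sudakov gives a cut with at most $n^2/9$ internal edges of $G$. This alone yields cost at most $n^2/9$, so we must exploit that $S$-edges are triangle-free.

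The plan is to choose a cut as follows. Take a maximum cut or a random-type cut built from a vertex neighborhood. For a vertex $v$, $N(v)$ is triangle-free in $G$ because $G$ is $K_4$-free. Use the classical Erd\H{o}s--Sudakov style cuts $(N(v), V\setminus N(v))$ and their refinements, and average over $v$ with weights. The cost of such a cut counts internal edges in $V\setminus N(v)$ plus edges inside $N(v)$. Each edge $xy$ inside $N(v)$ forms the triangle $vxy$, so it lies in $T$ and costs only $1/2$.

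I would then write the averaged cost as a combination of homomorphism densities: edges weighted by $T/S$ type, counts of triangles, $K_4$-free constraints, and so on. The goal is to prove the resulting inequality
\[\min_{\text{cuts}} w(\text{uncut}) \le \tfrac{1}{18}n^2.\]
I would prove it via flag algebras on $K_4$-free graphs with edges 2-coloured by type ($T$ or $S$), as the abstract indicates. The coloring constraint is local: an $S$-edge is in no triangle, and every $T$-edge lies in some triangle. The second condition is not finitely expressible, but it can be dropped, since relabeling a $T$-edge as $S$ only increases weight. So we only impose "no triangle contains an $S$-edge" and $K_4$-freeness.

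The main obstacle is that min-cut is not a flag-algebra-friendly quantity. I would first handle the regime where $S$-edges have density at least some $\varepsilon$ by a direct argument. There, the bipartite-like structure of $S$ plus Sudakov on the $T$-part (weight $1/2$ gives $n^2/18$ from the $T$-graph alone, if the $S$-edges can be cut) should suffice. Concretely, I would try to take Sudakov's cut for $G$ and argue via local switching (each vertex on the side minimizing its weighted internal degree) that uncut $S$-edges are compensated. If this fails, I would fall back on expressing the expected cost of a family of random cuts (neighborhood cuts, and cuts from triangle-vertex classes) as flag densities. I would then certify with SDP that some convex combination is at most $1/18$ of $n^2$, with a stability check that the tripartite construction is the only tight case.
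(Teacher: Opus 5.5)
Your proposal has a genuine gap: none of its branches produces the exact bound $n^2/18$.

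\textbf{The $S$-dense regime.} Here the ``direct argument'' is only a hope. Local switching guarantees only that every vertex has internal weighted degree at most half its weighted degree. That gives a cut of weight at most $e^w/2$, and this guarantee is already $n^2/10$ for a balanced blow-up of $C_5$, whose true optimum is $n^2/25$. Nor can you in general cut all $S$-edges on top of Sudakov's cut of the $T$-graph. The $S$-graph, though triangle-free, need not be bipartite, nor need it be compatible with that cut.

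\textbf{The $S$-sparse regime.} Sudakov's theorem applied to $G$ gives weight at most $\tfrac12\cdot\tfrac{n^2}{9}$, plus half the number of $S$-edges left inside the parts. That is $n^2/18+O(\varepsilon n^2)$, not $n^2/18$.

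\textbf{The fallback.} You propose certifying by SDP that some combination of neighbourhood and random cuts costs at most $n^2/18$ on every graph. Restricted to graphs with all edges in triangles, this would be a flag-algebra reproof of Sudakov's theorem using only those cuts, and you give no evidence that this is achievable. Moreover, a flag-algebra certificate on its own only yields the bound up to $o(n^2)$.

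\textbf{The missing idea.} Work with a \emph{bad} graph, one in which every cut has weight at least $n^2/18$, and show that it has \emph{no} $S$-edges at all. Then every edge has weight $1/2$, and Sudakov's theorem gives a cut of weight at most $n^2/18$ exactly. The paper proves this in two steps.

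First, it runs flag algebras on $K_4$-free $2$-edge-coloured graphs with no triangle containing an $S$-edge. This is the same relaxation you propose. Badness enters as \emph{constraints} rather than as an objective, one for each of three cut types:
\begin{itemize}
\item a uniformly random cut;
\item the neighbourhood cut $(N(v),V\setminus N(v))$;
\item a cut rooted at an $S$-edge $uv$: since $N(u)\cap N(v)=\emptyset$, it places $N(u)$ and $N(v)$ on opposite sides and splits the remaining vertices at random.
\end{itemize}
Using constraints this way sidesteps your ``min-cut is not flag-friendly'' obstacle. The SDP then shows that the $S$-edge density of any convergent sequence of bad graphs tends to $0$.

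Second, a blow-up argument upgrades density $0$ to exactly $0$. Replace each vertex of a bad $G$ by an independent set of $k$ clones; the result is still bad. The reason is that in a minimum cut, clones have equal cost on either side, so a split cluster can be moved wholesale to one side. Meanwhile the $S$-edge density of the blow-up tends to $2e_S(G)/n^2$, so $e_S(G)=0$.
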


Note that if $G$ is triangle-free, then every edge has weight $1$. Hence, in that case Question~\ref{wk5} reduces to the classical triangle-free setting of determining $D_2(K_3)$. For a long time, the best upper bound was $n^2/18$, which was improved by Balogh, Clemen, and Lidick\'y~\cite{balogh2021max} to $n^2/23.5$. Therefore, it suffices to handle the case when $G$ contains a triangle. If every edge is in a triangle, then every edge has weight $1/2$. The only condition we have now is that $G$ is $K_4$-free, hence in this case the result of Sudakov~\cite{MR2359832} solves the question. Therefore, Theorem~\ref{wk5up} relates the two problems.

\medskip

The \emph{Ramsey-Tur\'an number} $RT(n,H,m)$ is the maximum number of edges in an $n$-vertex $H$-free graph~$G$ with $\alpha(G) \leq m$.
This function was introduced by S\'os~\cite{SosRT}, and Erd\H os and S\'os~\cite{ErdosSosRT}, who studied the case when $H=K_q$ is a clique and $m=m(n)=o(n)$ is a sublinear function in $n$. Since then, determining the Ramsey-Tur\'an number for various $H$ and $m$ has become a classical topic in extremal combinatorics. In particular,  $RT(n,K_q,o(n))$ has been determined for every $q\geq 3$, see~\cite{BolobasErdos,MR1300968,ErdosSosRT,Szemeredi}.

In this paper, we initiate connecting these two types of problems, and raise the following general problem. Similar questions have been studied for other classical extremal problems; see, e.g.,~\cite{baloggeneralizedRT, balogh2018triangle, BMS2016,  Han2021ART,  kim2018conjectures, nenadov2020ramsey}.

\begin{problem}\label{meta}
Let $G$ be an $n$-vertex $H$-free graph with a sublinear independence number. At most how many edges are needed to be removed to make it bipartite?
\end{problem}

\begin{definition}
Given a graph $H$, let $D_2^\#(n, H, m)$ be the minimum number of edges that are needed to be removed from every $n$-vertex $H$-free graph $G$ with $\alpha(G)\le m$ to make $G$ bipartite. Define
$$d_2(H)\coloneqq \lim_{\delta\to 0}\limsup_{n\to \infty} \frac{D_2^\#(n,H,\delta n)}{\binom{n}{2}}.$$
\end{definition}

It is easy to see that $d_2(H)$ is well-defined, and we expect that the corresponding limit exists even without taking the limit superior. Throughout the paper, we ignore the $o(1)$ terms, whenever this causes no ambiguity.
We are particularly interested in the case when $H$ is a clique.
For $H=K_3$, since every $n$-vertex triangle-free graph with sublinear independence number has $o(n^2)$ edges, we immediately get that $d_2(K_3)=0$. When $H$ is a larger clique, we have the following proposition, which will be proved in Section~\ref{sec:prelim}.

\begin{proposition}\label{prop: K_4}
For every $q\geq 2$, we have\\
\noindent(i) $$d_2(K_{q+1})\geq\frac{D_2(K_{q})}{2};$$
\noindent(ii) $$d_2(K_{q+1})\leq D_{2}(K_{q});$$
\noindent(iii) $$d_2(K_{4})\ = \ \frac{D_2(K_{3})}{2}.$$
\end{proposition}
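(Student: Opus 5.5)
The plan is to handle the three parts separately: a blow-up construction for (i), a regularity-lemma argument for (ii), and a weighted refinement of that argument for (iii). Parts (i) and (iii) together give the equality in (iii).

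For (ii), I will run the standard Ramsey–Tur\'an regularity argument.
\begin{enumerate}[label=(\alph*)]
\item Let $G$ be $K_{q+1}$-free on $n$ vertices with $\alpha(G)\le\delta n$. Apply Szemer\'edi's regularity lemma with parameter $\varepsilon\gg\delta$, obtaining clusters $V_1,\dots,V_k$.
\item Form the reduced graph $R$ on $[k]$, with $ij$ an edge when $(V_i,V_j)$ is $\varepsilon$-regular of density at least $d$ (where $\varepsilon\ll d$).
\item Show $R$ is $K_q$-free. Suppose $R$ contains a $K_q$ on clusters $V_{i_1},\dots,V_{i_q}$. Greedily pick typical vertices $v_2\in V_{i_2},\dots,v_q\in V_{i_q}$ forming a clique whose common neighbourhood in $V_{i_1}$ has linear size $\ge d^{q}|V_{i_1}|/2$. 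Since $\alpha(G)\le\delta n$ is sublinear, this common neighbourhood contains an edge, which together with $v_2,\dots,v_q$ gives a $K_{q+1}$.
\item Blow up $R$ with equal parts. The blow-up is still $K_q$-free, so by the definition of $D_2(K_q)$ it can be made bipartite by deleting at most $(D_2(K_q)+o(1))\binom{k}{2}$ pairs worth of edges.
\item Pull this partition back to $G$. The cost is at most $D_2(K_q)\binom n2$, since each deleted cluster pair has density at most $1$, plus $O(\varepsilon+d)n^2$ for irregular pairs, sparse pairs and edges inside clusters.
\end{enumerate}

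For (iii), the same argument with weights gives $d_2(K_4)\le D_2(K_3)/2$. With $q=3$, step (c) already shows that $R$ is triangle-free. The extra ingredient is Szemer\'edi's lemma from the Ramsey–Tur\'an theory of $K_4$: in a $K_4$-free graph with $\alpha=o(n)$, every $\varepsilon$-regular pair has density at most $1/2+o(1)$. I would prove this by the usual argument that a pair of density $1/2+\gamma$ contains a vertex whose neighbourhood on the other side has linear size and still spans a dense structure, yielding a $K_4$ by small independence number. Then the partition of $R$ coming from the triangle-free bound $D_2(K_3)$, applied to the equal blow-up of $R$, deletes pairs each of density at most $1/2+o(1)$. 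Hence the total removed is at most $(D_2(K_3)/2+o(1))\binom n2$. Combined with (i) for $q=3$, this gives equality.

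For (i), I will build a lower-bound construction. Take an $m$-vertex $K_q$-free graph $F$ which requires $(D_2(K_q)-o(1))\binom m2$ deletions, chosen so that the fractional/blow-up version of the deletion problem also needs this proportion; this holds since $D_2$ is blow-up invariant. Replace each vertex of $F$ by a set of size $n/m$. Between sets corresponding to edges of $F$, place a Bollob\'as–Erd\H{o}s-type pair of density $1/2$ (the high-dimensional sphere construction). Inside each set, place a sparse graph with small independence number. This should give a $K_{q+1}$-free graph with $\alpha=o(n)$: a clique may use two vertices in one part, but not more, thanks to the geometric structure. The pairs are quasirandom enough that any bipartition loses at least half of the deletion cost of the blow-up of $F$, i.e.\ at least $(D_2(K_q)/2-o(1))\binom n2$.

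I expect the main obstacle to be verifying (i): the construction must be genuinely $K_{q+1}$-free for all $q$, so that only one part can host two clique vertices. One also has to justify that the Bollob\'as–Erd\H{o}s pairs behave like density-$1/2$ weights for every cut, not just for the natural ones. If the geometric construction resists a clean analysis for general $q$, I would first secure the case $q=3$ needed for (iii), where it is the classical Bollob\'as–Erd\H{o}s graph.
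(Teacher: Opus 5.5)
Your plan for all three parts is the paper's: a Bollob\'as--Erd\H{o}s blow-up of an extremal $K_q$-free graph for (i), $K_q$-freeness of the reduced graph for (ii), and Szemer\'edi's density-$1/2$ bound on regular pairs for (iii). Two of your choices differ from the paper but are harmless, even cleaner: you blow up $R$ instead of applying $D_2$ to $R$ directly, and you handle split clusters by noting that $\sum_{xy\in E(F)}\bigl(p_xp_y+(1-p_x)(1-p_y)\bigr)$ is affine in each $p_x$, hence minimised at an integral point.

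The genuine gap is in (i). You never prove that your graph is $K_{q+1}$-free; you call this the main obstacle, and as you describe the construction it can fail. The inside-cluster graphs cannot be arbitrary sparse graphs with small independence number. Every cluster has to be a copy of the same spherical point set, with inside edges given by $d(u,v)>2-x$ and cross edges (for $xy\in E(F)$) by $d(u,v)<\sqrt2-x$. Then each $V_x\cup V_y$ with $xy\in E(F)$ induces exactly a Bollob\'as--Erd\H{o}s graph, which is $K_4$-free. With uncoordinated inside graphs this breaks: an inside edge between two nearby points of $V_x$ and one between two nearby points of $V_y$, all four points close on the sphere, already give a $K_4$. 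Once the clusters are coordinated, no extra geometry is needed for general $q$:
\begin{itemize}
\item a clique $S$ meets clusters forming a clique of $F$, hence at most $q-1$ clusters;
\item each cluster is triangle-free, so it contains at most two vertices of $S$;
\item two clusters each containing two vertices of $S$ would give a $K_4$ in the $K_4$-free graph $G[V_x\cup V_y]$.
\end{itemize}
Hence $|S|\le (q-1)+1=q$, which is exactly the paper's argument.

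A smaller issue is in (iii). Your sketch of the density-$1/2$ lemma, via a single vertex whose neighbourhood ``spans a dense structure'', does not work as stated. A $K_4$ through one vertex requires a triangle in its neighbourhood, and small independence number does not force triangles. The standard argument instead picks an edge $x_1x_2$ among the at least $(1-\varepsilon)|X|$ vertices of $X$ that have at least $(1/2+\gamma-\varepsilon)|Y|$ neighbours in $Y$; such an edge exists since $\alpha(G)=o(n)$. The common neighbourhood of $x_1,x_2$ in $Y$ then has size at least $(2\gamma-2\varepsilon)|Y|$, so it contains an edge, giving a $K_4$. The paper only cites Szemer\'edi here, so this is not a structural gap, but your sketch should be replaced by this argument.
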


Proposition~\ref{prop: K_4} (iii) claims that determining $d_2(K_{4})$ is equivalent to determining $D_2(K_{3})$.
Our main result settles Problem~\ref{meta} when $H=K_5$.

\begin{theorem}\label{k5}
$$d_2(K_5)=\frac{1}{9}.$$
\end{theorem}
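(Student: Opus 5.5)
We prove the two inequalities $d_2(K_5)\ge 1/9$ and $d_2(K_5)\le 1/9$ separately.

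For the lower bound we use Proposition~\ref{prop: K_4}(i) with $q=4$, which gives $d_2(K_5)\ge D_2(K_4)/2$. The balanced complete $3$-partite graph shows $D_2^\#(n,K_4)\ge n^2/9$, so $D_2(K_4)\ge 2/9$ after normalizing by $\binom n2$. Together these give $d_2(K_5)\ge 1/9$. Concretely, the construction behind part (i) should be a balanced $3$-partite blow-up in which each part carries a triangle-free graph with sublinear independence number and $o(n^2)$ edges (Erd\H{o}s--Rogers type), and the cross edges form a Bollob\'as--Erd\H{o}s style graph of density $1/2$. Then every bipartition must cut about half of the $n^2/3$ cross edges lying inside a side, roughly $n^2/18$ edges, which is $\tfrac19\binom n2$.

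For the upper bound, let $G$ be $K_5$-free on $n$ vertices with $\alpha(G)\le\delta n$.
\begin{enumerate}
\item Apply Szemer\'edi's regularity lemma with parameter $\varepsilon\ll$ a density threshold $d$, where $\delta\ll\varepsilon$. Discard edges in irregular pairs, in pairs of density below $d$, and inside clusters; this removes $o(n^2)$ edges.
\item Build the reduced graph $R$. A cluster pair of density above $1/2+d$ gets weight $1$; a pair of density in $(d,1/2+d]$ gets weight $1/2$.
\item Use the standard Ramsey--Tur\'an arguments (as in Szemer\'edi and Bollob\'as--Erd\H{o}s), which rely on $\alpha(G)=o(n)$ and $K_5$-freeness. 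They show that $R$ contains no $K_3$, that $R$ contains no $K_3$ with a pair of density greater than $1/2$, and, via the embedding lemma with sublinear independence, more generally that $R$ has no $K_4$. Hence $R$ is $K_4$-free and every edge of $R$ lying in a triangle has density at most $1/2+o(1)$.
\item Apply Theorem~\ref{wk5up} to $R$ with the weighting of that theorem. Its weights dominate the true densities: densities are at most $1/2+d$ on triangle edges and at most $1$ elsewhere. So $R$ has a bipartition whose weight inside the parts is at most $|R|^2/18$.
\item Lift this cluster bipartition to $V(G)$. The edges of $G$ inside the parts then number at most $(1/18+O(d))n^2+o(n^2)$, and letting $d\to 0$ gives $d_2(K_5)\le 1/9$.
\end{enumerate}

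The main obstacle is step 3: certifying that whenever three clusters span a triangle in $R$, every pair among them has density at most $1/2+o(1)$. The plan is the standard embedding argument. Take a typical vertex $v$ in one cluster $V_1$ of a triangle $V_1V_2V_3$. If the pair $V_2V_3$ has density above $1/2$, the neighbourhoods $N(v)\cap V_2$ and $N(v)\cap V_3$ are linear-sized. Since $\alpha(G)=o(n)$, each linear set contains an edge, and this can be iterated to find $K_5$. The delicate case, as in Bollob\'as--Erd\H{o}s, is when the density exceeding $1/2$ must be combined with the sublinear independence number to force two edges between the common neighbourhoods.

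Beyond that, the weighted inequality itself, Theorem~\ref{wk5up}, is assumed from earlier in the paper.
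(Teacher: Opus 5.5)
Your upper bound follows the paper's route exactly: regularity, the cluster-graph facts of Theorem~\ref{cluster} (which the paper also quotes as standard), then Theorem~\ref{wk5up} applied to $R$ with its triangle-based weights, which exceed the true pair densities by at most $o(1)$. The density-based weights of your step~2 play no role. They also disagree with those of Theorem~\ref{wk5up} on pairs not in a triangle of $R$ with density at most $1/2+d$, but step~4 correctly uses the theorem's weights.

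Your lower bound takes a different but valid route. You combine Proposition~\ref{prop: K_4}(i) with $q=4$ and the balanced complete $3$-partite graph, getting $d_2(K_5)\ge D_2(K_4)/2\ge 1/9$. The paper instead works directly with $\mathrm{BE}(n,3,x)$ and counts triangles. There are about $n^3/216$ of them, and all but $o(n^2)$ edges lie in at most $(1/12+o(1))n$ of them. Hence every bipartite subgraph misses at least $(1/18+o(1))n^2$ edges. The paper's count is self-contained and avoids the regularity and cluster-moving argument inside Proposition~\ref{prop: K_4}(i). Yours is more modular and shows the bound is inherited from Sudakov's extremal example. However, your informal remark that every bipartition loses about $n^2/18$ cross edges is not an argument for bipartitions that split the three classes. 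That is exactly what the proposition, or the triangle count, supplies.

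One claim in step~3 is false and should be deleted: $R$ need not be triangle-free. The reduced graph of $\mathrm{BE}(n,3,x)$ is essentially complete $3$-partite, and if $R$ were triangle-free, Theorem~\ref{wk5up} would be unnecessary. The correct facts are the ones you state as your conclusion: $R$ is $K_4$-free, and edges of $R$ in triangles have density at most $1/2+o(1)$.

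Your sketch of that density bound is also imprecise, since finding one edge in each of $N(v)\cap V_2$ and $N(v)\cap V_3$ does not give a $K_5$. The standard argument runs as follows.
\begin{itemize}
\item Suppose $(V_2,V_3)$ has density at least $1/2+\eta$. Regularity transfers this, up to $\varepsilon$, to the pair $N_i=N(v)\cap V_i$.
\item At least an $\eta/3$-fraction of $N_2$ then has more than $(1/2+\eta/3)|N_3|$ neighbours in $N_3$.
\item This linear-size set spans an edge $x_1x_2$, whose common neighbourhood in $N_3$ has size at least $(2\eta/3)|N_3|$.
\item That common neighbourhood therefore spans an edge $y_1y_2$, and $vx_1x_2y_1y_2$ is a $K_5$.
\end{itemize}
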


We prove a stability version of Theorem~\ref{k5}, stated as Theorem~\ref{lemma:fah}. We place it in the Appendix, as its proof relies entirely on the method of flag algebra.
Part of our proof of Theorem~\ref{k5} also uses flag algebras, through only three simple cuts. Hence, it would be interesting to find an entirely combinatorial proof of Theorem~\ref{k5}.

We emphasize that Problem~\ref{meta} is open even for cliques of size at least $6$.

\begin{problem}
Determine $d_2(K_q)$ for $q\geq 6$.
\end{problem}

When $q=4k+1\geq 7$, the following construction is a natural candidate for the lower bound of $d_2(K_q)$:
take a balanced complete $2k$-partite graph on $n$ vertices and embed into each class a triangle-free graph with independence number $o(n)$.
For other values of $q$, one can obtain a family of constructions in the following way: let $V_1,\ldots,V_j$ be the classes of some vertex partition. Between some pair of classes, we have a complete bipartite graph; between the remaining pairs, we have a Bollob\'as-Erd\H os graph, defined later in Section~\ref{sec:prelim}.
Inside each $V_i$, we have a class of the Bollob\'as-Erd\H os graph, a triangle-free graph with independence number $o(n)$.
For each such construction, its clique-number could be easily computed. To compute how many edges are needed to be  removed to make it bipartite, we would need a careful optimization, as now the classes are not necessarily of the same size. As we do not have good upper bounds, we did not attempt to optimize the construction. For $q=6$, we think that the partition $V_1,V_2,V_3$ where $(V_1,V_2)$ forms a Bollob\'as-Erd\H os graph and $(V_1,V_3)$, $(V_2,V_3)$ are complete bipartite graphs with $|V_1|=|V_2|=2n/5$, $|V_3|=n/5$ is optimal, and $d_2(K_6)=4/25$.

The rest of the paper is organized as follows. In Section~\ref{sec:prelim} we show that to prove Theorem~\ref{k5}, it is sufficient to solve  a weighted Tur\'an-type problem, i.e., sufficient to answer~\Cref{wk5} when $a=4$ and $b=3$; in Section~\ref{sec:human} we prove the corresponding weighted Tur\'an-type problem when the graph has large edge density; in Section~\ref{sec:FA} we handle the case when the graph has small edge density, where we use flag algebras.

\section{Preliminary}\label{sec:prelim}

We will need the following classical construction.
Denote by $G\coloneqq \mathrm{BE}(n,s,x)$ the $s$-Bollob\'as--Erd\H os graph introduced in~\cite{balogh2017sBE}, defined as follows. The case when $s=2$ is the well-known Bollob\'as--Erd\H os graph~\cite{BolobasErdos}. For further details on the Bollob\'as--Erd\H os graph, see also~\cite{maya2025generalized, liu2026geometric}.

Fix $\varepsilon>0$ and let $d$ be sufficiently large. Let $P$ be an $n/s$-element point set on a $d$-dimensional unit sphere with radius $1$. Put $x=\varepsilon/\sqrt{d}$.
Take $s$ disjoint copies $X_1,\dots,X_s$ of $P$ as the vertex classes of $G$.
For $u\in X_i$ and $v\in X_j$, put an edge $uv\in E(G)$ if $d(u,v)>2-x$ when $i=j$, and if $d(u,v)<\sqrt2-x$ when $i\neq j$.
Each class $X_i$ spans a triangle-free graph with independence number $o(n)$ and with $o(n^2)$ edges.
For every $i\neq j$, the pair $X_i\cup X_j$ induces a copy of the Bollob\'as--Erd\H os graph, which is $K_4$-free as proved in~\cite{BolobasErdos}. Therefore, $G$ is $K_{s+2}$-free.
Moreover, every induced subgraph $G[X_i,X_j]$ has edge density $1/2+o(1)$, as each vertex $x\in X_i$ has $(1/2-o(1))n/s$ neighbors in $X_j$. Furthermore, for every $\beta>0$, if $n$ and $d$ are sufficiently large, then 
$G[X_i,X_j]$ is a $\beta$-regular bipartite graph with density $1/2+o(1)$.
(Recall that it means that every $X'_i\subset X_i, \ X'_j\subset X_j$ with $|X'_i|>\beta |X_i|,\ |X'_j|>\beta |X_j|$ satisfy that the density of $G[X'_i,X'_j]$ is between $1/2-\beta$ and $1/2+\beta$.)

\begin{proof}[Proof of Proposition~\ref{prop: K_4}]

(i) Our goal is to construct an $n$-vertex $K_{q+1}$-free graph $G$ with $\alpha(G)=o(n)$ such that at least $\left(D_2(K_{q})/2+o(1)\right)\binom{n}{2}$ edges must be removed to make $G$ bipartite.
Choose $m$ and $n$ sufficiently large so that
$k:=n/m$ is also sufficiently large. To simplify calculation, we shall assume that $k$ is an integer.
 Let $H$ be a $K_{q}$-free graph on $m$ vertices such that every bipartite subgraph of $H$ misses at least $\left(D_2(K_{q})+o(1)\right)\binom{m}{2}$ edges of $H$.
We now construct $G$ from $H$ as follows.
Replace each vertex $x\in V(H)$ with a set $V_x$ of $k$ vertices and call it a cluster. Then, $G$ has $n=mk$ vertices.
We view the family of clusters as being equipped with a global Bollob\'as--Erd\H os-type structure, so that for every edge $xy\in E(H)$, the corresponding clusters $V_x\cup V_y$ induce a Bollob\'as--Erd\H os graph, and for every copy of $K_{q-1}$ in $H$, the union of the corresponding $q-1$ clusters induces a $(q-1)$-Bollob\'as--Erd\H os graph.

Note that every $G[V_x, V_y]$ is $K_4$-free and $\alpha(G)=o(n)$ by the definition of $G$. Observe that every clique $S$ in $G$ naturally corresponds to a clique in $H$. Furthermore, as the clusters are triangle-free, each cluster might contain at most two vertices of $S$. The property that every $G[V_x, V_y]$ is $K_4$-free implies that there could be at most one cluster containing two vertices of $S$, implying that  $G$ is $K_{q+1}$-free as $H$ is $K_{q}$-free. Hence, it suffices to show that at least $\left(D_2(K_{q})/2+o(1)\right)\binom{n}{2}$ edges must be removed to make $G$ bipartite.
Note that the total number of edges inside clusters is $m\cdot o(k^2)=o(n^2)$ when $k\to \infty$, which is subquadratic. Therefore, we may delete all such edges and work with the subgraph $G'$ obtained from $G$ by keeping only the edges between distinct clusters. This affects the  number of edges that must be removed to make $G$ bipartite by at most $o(n^2)$, thus it is sufficient to prove the desired lower bound $\left(D_2(K_{q})/2+o(1)\right)\binom{n}{2}$ for $G'$.

Let $(A,B)$ be a bipartition of $V(G')$ maximizing the number of edges between $A$ and $B$, and among all such bipartitions choose one for which the number of clusters intersecting both $A$ and $B$ is minimum.

If every cluster is entirely contained in $A$ or in $B$, then $(A,B)$ induces a bipartition of $H$ naturally. Every bipartite subgraph of $H$ misses at least $D_2(K_{q})\binom{m}{2}$ edges by the choice of $H$.
Each such edge $xy\in E(H)$ corresponds to a pair $(V_x, V_y)$ with edge density $1/2+o(1)$ in $G'$. Therefore, to make $G'$ bipartite, the number of edges that must be removed is at least
\[
\left(\frac12+o(1)\right)\cdot k^2\cdot \left(D_2(K_{q})+o(1)\right)\cdot\binom{m}{2}
=
\left(\frac{D_2(K_{q})}{2}+o(1)\right)\binom{n}{2}.
\]

Now we may assume that some cluster $C$ is split by $(A,B)$.
For a cluster $V_x$ define $V_x^A:= A\cap V_x$ and $V_x^B:= B\cap V_x$.
Split $V_x^A$ and $V_x^B$ into classes of sizes $\beta k$. The total number of leftover vertices is at most $\beta n$.
At the end we might just assign them randomly, and they do not impact the size of the cut too much, only by $o(n^2)$. 
By the $\beta$-regularity, the density of pairs of those new subclasses are $1/2+o(1)$.
Now, if for one cluster, two of its subclasses are not in the same class of the bipartition $(A,B)$, then we can move them all either into $A$ or $B$, without decreasing the number of cross-edges in the $[A,B]$-cut with more than $o(k^2m)$. Doing this for every $V_x$, we obtain a bipartition, where each cluster is inside one of the classes, and the number of cross-edges were decreased by at most $o(n^2)$.

(ii) When $q=2$, the inequality is trivial, as a triangle-free graph with sublinear independence number has subquadratically many edges. For $q\geq 3$, let $G$ be an arbitrary $K_{q+1}$-free graph on $n$ vertices with sublinear independence number. Apply Szemer\'edi's Regularity Lemma to $G$ to obtain a cluster graph $R$ on $m=O(1)$ vertices. It is standard to show that $R$ is $K_{q}$-free and every edge in a copy of $K_{q-1}$ in $R$ has edge density (in $G$) at most $1/2+o(1)$. (See Theorem~\ref{cluster} below for the case $q=4$.)
Hence, we can remove at most $D_2(K_{q})\cdot \binom{m}{2}$ edges from $R$ to make it bipartite, so the number of edges we need to remove from $G$ to make it bipartite is at most 
\begin{equation}\label{eqn: d-D}
    D_2(K_{q})\cdot \binom{m}{2}\cdot \left(\frac{n}{m}\right)^2+o(n^2)=D_2(K_{q})\cdot \binom{n}{2}+o(n^2),
\end{equation}
implying that
\[
d_2(K_{q+1})\leq D_{2}(K_{q}).
\]

(iii) By (i), we have $d_2(K_4)\geq D_2(K_3)/2$. For the other direction, assume that  $G$ is a $K_4$-free graph, with sublinear independence number.
Szemer\'edi~\cite{Szemeredi}  proved that the
 cluster graph $R$ is triangle-free and every edge  of $R$ has density in $G$ at most $1/2+o(1)$.
Now we can save a $1/2$ factor in~\eqref{eqn: d-D}, implying that
\[
d_2(K_4)\leq \frac{D_2(K_3)}{2}. \qedhere
\]
\end{proof}

\subsection{Proof of Theorem~\ref{k5}}

The graph $\mathrm{BE}(n,3,x)$ is $K_5$-free with sublinear independence number.
In $\mathrm{BE}(n,3,x)$, removing all edges inside the classes $X_i$ and between $X_1$ and $X_2$ provides a bipartite graph. The number of removed edges is $(1/18+o(1))n^2$. We show that one cannot do much better, which gives that $d_2(K_5)\geq 1/9$. 
Indeed, the properties of the Bollob\'as-Erd\H os graph imply that the number of triangles in $\mathrm{BE}(n,3,x)$ is at least $(1/8+o(1))n^3/27$.
Moreover, all but $o(n^2)$ edges are contained in at most $(1/12+o(1))n$ triangles, while every edge is contained in at most $n$ triangles.
Thus, the exceptional $o(n^2)$ edges are contained in only $o(n^3)$ triangles. 
Therefore, at least \[ \frac{\left(\frac18+o(1)\right)\frac{n^3}{27}} {\left(\frac{1}{12}+o(1)\right)n} = \left(\frac{1}{18}+o(1)\right)n^2 = \left(\frac{1}{9}+o(1)\right)\binom{n}{2}\] edges must be removed to eliminate all triangles.

For the rest of the paper, we will focus on the proof of $d_2(K_5)\leq 1/9$.
Let $G$ be a $K_5$-free graph on $n$ vertices with $\alpha(G)=o(n)$. Apply Szemer\'edi's Regularity Lemma to $G$ to obtain a cluster graph $R$ on $m$ vertices. Extending the method of Szemer\'edi~\cite{Szemeredi}, the following result appeared in many papers.
    
\begin{theorem}\label{cluster}
(i) The cluster graph $R$ contains no $K_4$.\\
(ii) Every edge  of $R$ which is in a triangle of $R$ has density in $G$ at most $1/2+o(1)$.
\end{theorem}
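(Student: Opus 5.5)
Both parts follow Szemerédi's classical argument for $RT(n,K_4,o(n))$. Fix the regularity parameter $\varepsilon$ and a density threshold $d$ with $\varepsilon \ll d$. Let the cluster graph $R$ have an edge $V_iV_j$ exactly when $(V_i,V_j)$ is $\varepsilon$-regular with density at least $d$. The key tool is the standard counting/embedding lemma: if clusters $V_{i_1},\dots,V_{i_t}$ span a clique in $R$, then for any $\eta>0$ fixed, all but an $O(\varepsilon/d)$-fraction of vertices $v\in V_{i_1}$ have neighbourhoods $N(v)\cap V_{i_s}$ of size at least $(d-\varepsilon)|V_{i_s}|$ for every $s\ge 2$, and these neighbourhoods are again large enough for regularity to be inherited. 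Throughout we use $\alpha(G)=o(n)$: any set of size linear in $n$ (in particular any set of size $\ge \delta |V_i|$ with fixed $\delta$) spans an edge, and in fact contains any bounded number of disjoint edges.

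\textbf{Part (i).} Suppose $V_1,V_2,V_3,V_4$ form a $K_4$ in $R$. We embed $K_5$ greedily, with two vertices in one cluster. Pick a typical $v_1\in V_1$ whose neighbourhoods $N_s=N(v_1)\cap V_s$ ($s=2,3,4$) have size $\ge (d-\varepsilon)|V_s|$. Inside $V_2$, restrict to the set $N_2'$ of vertices of $N_2$ that are typical with respect to $(N_3,N_4)$; this is still a linear-size set. Since $\alpha(G)=o(n)$, $N_2'$ contains an edge $u u'$. By choosing among many edges (linear-size sets contain $\Omega(n)$ disjoint edges, and only few vertices are atypical), we can take $u,u'$ both typical. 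Then the common neighbourhoods $N_3\cap N(u)\cap N(u')$ and $N_4\cap N(u)\cap N(u')$ have size at least $(d-\varepsilon)^3|V_s|$, and regularity between $V_3$ and $V_4$ gives an edge $w_3w_4$ between them. Then $\{v_1,u,u',w_3,w_4\}$ spans a $K_5$, a contradiction.

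\textbf{Part (ii).} Let $V_1V_2V_3$ be a triangle in $R$ and suppose $d(V_1,V_2)\ge 1/2+\gamma$ with $\gamma>0$ fixed. Pick a typical $w\in V_3$ and set $A=N(w)\cap V_1$ and $B=N(w)\cap V_2$; both have linear size and $(A,B)$ is still regular with density $\ge 1/2+\gamma-o(1)$. It suffices to find a $K_4$ inside $A\cup B$, since together with $w$ it gives a $K_5$. This is exactly Szemerédi's theorem that a $K_4$-free graph with $\alpha=o(n)$ has no regular pair of density exceeding $1/2$. To prove it, note that by density some typical $a\in A$ has $|N(a)\cap B|\ge(1/2+\gamma/2)|B|$. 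Averaging over $B$, many vertices $b\in B$ have $|N(b)\cap A|\ge (1/2+\gamma/2)|A|$. Take a set $B'\subseteq N(a)\cap B$ of linear size consisting of such vertices. By $\alpha=o(n)$, $B'$ contains many disjoint edges $bb'$. For each such edge, $N(b)\cap A$ and $N(b')\cap A$ each have density above $1/2$ in $A$, so they intersect in at least $\gamma|A|$ vertices. That intersection is linear, so it contains an edge $a'a''$ (restricting to $N(a)$ is not needed). We then get the $K_4$ $\{b,b',a',a''\}$ inside $A\cup B$, and adding $w$ gives a $K_5$ in $G$. The same argument applies directly when we do not need $a$ at all.

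\textbf{Main obstacle.} The delicate point is bookkeeping: the edges supplied by $\alpha(G)=o(n)$ must consist of typical vertices, and the common neighbourhoods must remain of linear size with respect to the sublinear $\alpha$. We handle this by choosing $\delta\to0$ after fixing $\varepsilon,d,\gamma$, which is legitimate since $d_2$ takes $\delta\to0$ last.
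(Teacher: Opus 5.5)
Your Part (ii) is correct; it is Szemer\'edi's argument, and as you note the vertex $a$ can be dropped.

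The gap is in Part (i). You assert that $N_3\cap N(u)\cap N(u')$ and $N_4\cap N(u)\cap N(u')$ have size at least $(d-\varepsilon)^3|V_s|$, and this does not follow.

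\begin{itemize}
\item Typicality of $u$ and of $u'$ separately only gives $|N(u)\cap N_3|\ge (d-\varepsilon)|N_3|$ and $|N(u')\cap N_3|\ge (d-\varepsilon)|N_3|$. Two subsets of $N_3$ of relative density $d-\varepsilon<1/2$ need not intersect at all.
\item Getting the exponent $3$ by chaining would require $u'$ to be typical with respect to $N(u)\cap N_3$ and $N(u)\cap N_4$, which are sets depending on $u$. The corresponding exceptional set in $V_2$ can have size about $2\varepsilon|V_2|$. But $u'$ is confined to $N(u)\cap V_2$, which may have size $o(n)$ and lie entirely inside that exceptional set.
\item Choosing among $\Omega(n)$ disjoint edges does not rescue this. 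The hypothesis $\alpha(G)\le\delta n$ only guarantees a sparse set of edges, while the bad pairs $(u,u')$ can number up to order $\varepsilon|V_2|^2$. Nothing you use prevents every edge of $G[N_2']$ from being a bad pair.
\end{itemize}

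This is a genuine phenomenon, not bookkeeping. In a Bollob\'as--Erd\H{o}s pair of density $1/2$, every edge inside one class has negligible common neighbourhood in the other class. That is exactly why that graph is $K_4$-free, and why your own Part (ii) needs density strictly above $1/2$ to force $N(b)\cap N(b')\cap A$ to be large.

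The fix is to put the edge in the \emph{last} cluster, so you never need the common neighbourhood of an edge.
\begin{itemize}
\item Pick $v_1\in V_1$ with $|N_s|\ge (d-\varepsilon)|V_s|$ for $N_s=N(v_1)\cap V_s$, $s=2,3,4$.
\item Pick $v_2\in N_2$ with $N_s'=N(v_2)\cap N_s$ of size at least $(d-\varepsilon)|N_s|$ for $s=3,4$. All but $2\varepsilon|V_2|$ vertices of $V_2$ qualify, because $|N_s|\ge\varepsilon|V_s|$, and $|N_2|>2\varepsilon|V_2|$.
\item Pick $v_3\in N_3'$ with $|N(v_3)\cap N_4'|\ge (d-\varepsilon)|N_4'|$. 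This is possible once $\varepsilon\ll d^2$.
\end{itemize}
Then $W=N(v_3)\cap N_4'$ has size at least $(d-\varepsilon)^3|V_4|$. Once $\delta$ is small compared with $d^3$ divided by the bound on the number of clusters, this exceeds $\alpha(G)\le\delta n$. So $W$ contains an edge $xy$, and $\{v_1,v_2,v_3,x,y\}$ spans a $K_5$.

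The paper itself gives no proof of this theorem; it quotes it as a standard extension of Szemer\'edi's method, and the corrected argument above is that standard proof.
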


Hence, we transfer our problem to Problem~\ref{wk5} and it suffices to prove Theorem~\ref{wk5up}.

\subsection{Definitions and Notation}

An edge is {\it heavy} if it has weight $1$, and {\it light} if it has weight $1/2$.
For a vertex $v$,  denote by $a_v$ the number of heavy edges incident to $v$, and $A_v$ the set of vertices connected by a heavy edge to $v$. Similarly, denote by $b_v$ the number of light edges incident to $v$, and $B_v$ the set of vertices connected by a light edge to $v$.
Denote by $d_v$ the degree of $v$ and $d^w_v$ the weighted degree of $v$, then
\[d_v=a_v+b_v \text{\quad\quad and\quad\quad} d^w_v=a_v+\frac{b_v}{2}.\]
Write $e_h$ for the number of the heavy edges in a graph $G$, and $e_h^w$ for the total weight of the heavy edges of $G$. Similarly, write $e_\ell$ for the number of the light edges in $G$, and $e_\ell^w$ for the total weight of the light edges of $G$. Denote by $e^w$ the total weight of all edges of $G$, then
\[e(G)=e_h+e_\ell, \ e^w=e_h^w+e_\ell^w, \ e_h=e_h^w, \ e_\ell=2e_\ell^w, \ \sum_v a_v=2e_h=2e_h^w, \\ \sum_v b_v=2e_\ell=4e_\ell^w.\]
Denote $e^w_v$ the total weight of the edges in $N(v)$.
As each such edge has weight $1/2$, it is half of the number of edges in $N(v)$.
Denote by $d_{u,v}$ the number of common neighbors of vertices $u$ and $v$.
Denote by $B(G)$ the total weight of the edges of the largest bipartite subgraph of $G$. Let $n$ be the number of vertices and $m$ be the number of triangles in $G$.
Recall that we want to show $e^w-B(G)\leq n^2/18$.

We say that a graph $G$ is {\it bad} if $G$ is an $n$-vertex $K_4$-free graph, whose triangles contain no heavy edges and every cut in $G$ has weight at least $n^2/18$.

\section{
\texorpdfstring{The case when $e^w\ge n^2/6$}{The case when ew >= n²/6}
}\label{sec:human}

In this section we eliminate  the case when the  total edge weight is large.
\begin{lemma}\label{lemma:heavy}
    If $G$ is a bad graph on $n$ vertices, then $e^w \leq n^2/6$.
\end{lemma}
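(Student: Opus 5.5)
We argue by contraposition: assuming $e^w>n^2/6$, we exhibit a bipartition whose removed weight (edges inside the two parts) is below $n^2/18$, so $G$ is not bad. Three cuts will be averaged against each other. The first is a uniformly random bipartition, which removes exactly $e^w/2$ in expectation. The second and third are neighbourhood cuts $(N(v),V\setminus N(v))$ and $(N(u),V\setminus N(u))$.

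Neighbourhood cuts are natural here. Since $G$ is $K_4$-free, $N(v)$ is triangle-free. Every edge inside $N(v)$ lies in a triangle with $v$, so it is light. Summing over $v$, the weight inside the neighbourhoods is $\sum_v e^w_v=\tfrac12\cdot 3m$, where $m$ is the number of triangles. For a heavy edge $uv$ the neighbourhoods $N(u),N(v)$ are disjoint, because $uv$ lies in no triangle. Hence $d_u+d_v\le n$, and the cut $(N(u),V\setminus N(u))$ has $N(v)\subseteq V\setminus N(u)$ as an independent set on that side. For a light edge $uv$, $K_4$-freeness gives that $N(u)\cap N(v)$ is independent and $d_u+d_v-d_{u,v}\le n$.

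The key steps are as follows.
\begin{enumerate}
\item Write the removed weight of $(N(v),V\setminus N(v))$ as $e^w_v+w(V\setminus N(v))$. Bound $w(V\setminus N(v))$ by $e^w$ minus the weight incident to $N(v)$, with a correction for the edges inside $N(v)$. This expresses the removed weight through $\sum_{x\in N(v)} d^w_x$ and $e^w_v$.
\item Average this over $v$, with weights chosen to favour heavy-edge endpoints, for instance averaging over endpoints of heavy edges. This turns $\sum_v\sum_{x\in N(v)}d^w_x$ into sums such as $\sum_x d_x d^w_x$ and $\sum_{uv\ \mathrm{heavy}}(d_u+d_v)$.
\item Lower-bound $\sum_x d_xd^w_x$ by Cauchy--Schwarz in terms of $e^w$ and $e_h$. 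Use the pair inequalities above to control the triangle count $m$ and the heavy-edge degree sums.
\item Combine with the random-cut bound $e^w/2$, taking the better cut or a convex combination. The aim is a bound of the form
\[
\min\{\text{cut}\}\ \le\ \Phi(e_h^w,e_\ell^w,n).
\]
Under $e^w>n^2/6$ and the weighted Turán-type constraint, we want $\Phi<n^2/18$.
\end{enumerate}

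The weighted Turán-type constraint in step 4 is needed because $e^w>n^2/6$ forces a substantial heavy part. The all-light maximum is $n^2/6$, attained by the balanced tripartite graph, whereas a heavy complete bipartite graph reaches $n^2/4$. I would derive this constraint, roughly $e_\ell^w\le$ (a function decreasing in $e_h$), from the degree-sum inequalities in the same way Turán's theorem follows from $d_u+d_v\le n+d_{u,v}$.

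The main obstacle is step 1, controlling $w(V\setminus N(v))$: edges far from $v$ may be heavy, so they are not tied to triangles. If the direct averaging is too lossy, I would first reduce to the extremal regime. I would Zykov-symmetrize the weighted graph while keeping the inequalities valid, or at least show that the worst case is a blow-up structure: a complete bipartite heavy part glued to a light tripartite part. Then the inequality becomes a low-dimensional optimization over part sizes, which should be checked directly, with equality at the balanced tripartite all-light graph ($e^w=n^2/6$, cut $n^2/18$).
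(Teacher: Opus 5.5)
Your plan does not close, and the difficulty is not where you place it. Step 1 is an exact identity: the cut $(N(v),V\setminus N(v))$ removes $e^w-\sum_{x\in N(v)}d^w_x+2e^w_v$, and this already accounts for heavy edges far from $v$. Averaging uniformly over all $v$, with no weighting toward heavy endpoints, a bad graph satisfies
\[
\frac{n^3}{18}\le ne^w-\sum_x d_xd^w_x+3m .
\]
The real crux is the positive triangle term $3m$, and your proposal has no valid way to bound it from above. The pair inequalities $d_u+d_v-d_{u,v}\le n$ give lower bounds on $d_{u,v}$, hence on $m$, and upper bounds on degree sums. Both are the wrong direction here, since $m$ enters with a plus sign and $\sum_x d_xd^w_x$ with a minus sign.

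The missing idea is to write $\sum_x d_xd^w_x=\sum_x(d^w_x)^2+\frac12\sum_x b_xd^w_x$, which follows from $d_x=d^w_x+b_x/2$, and to pay for the triangles with the second sum. Each edge inside $N(x)$ forms a triangle with $x$, so both its endpoints lie in the light neighbourhood $B_x$, which is triangle-free. Mantel then gives $3m\le\frac14\sum_x b_x^2\le\frac12\sum_x b_xd^w_x$, because $d^w_x\ge b_x/2$. Applying Mantel to $N(x)$ with $d_x$ in place of $b_x$ is too weak: it loses $\frac14\sum_x a_x^2$. Cauchy--Schwarz on $\sum_x(d^w_x)^2$ alone then gives $\frac{n^3}{18}\le ne^w-4(e^w)^2/n$. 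For $x=e^w/n^2$ this reads $(12x-1)(6x-1)\le 0$, so $x\le 1/6$. This is exactly how the paper argues.

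The remaining parts of your plan are unnecessary or unhelpful.
\begin{itemize}
\item When $e^w>n^2/6$, the random cut removes $e^w/2>n^2/12$ in expectation. So neither it nor any convex combination involving it can produce a cut below $n^2/18$. For bad graphs it only yields the lower bound $e^w\ge n^2/9$.
\item The weighted Tur\'an-type constraint is never derived and is not needed.
\item Zykov symmetrization is not obviously admissible here, since deleting a vertex can turn light edges into heavy ones.
\item Your claim that $N(v)$ is an independent set when $uv$ is heavy is false. $N(v)\setminus\{u\}$ may contain light edges forming triangles with $v$. Only $N(u)\cap N(v)$ has such a property: it is empty for heavy $uv$ and independent for light $uv$.
\end{itemize}
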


In the proof we use only one cut,  the bipartition of $N(v)$ and $V(G)\setminus N(v)$ for every vertex $v$.
However, the simple proof that it works is (slightly) different from the methods of Sudakov~\cite{MR2359832}, and Erd\H{o}s, Faudree, Pach and Spencer~\cite{erdos1988makebi}.

\begin{proof}[Proof of Lemma~\ref{lemma:heavy}]

Let $G$ be a bad graph on $n$ vertices, with arbitrary edge weight.

Let $T_1$ be the number of ordered pairs $(uv,w)$ such that $uv$ is a heavy edge and neither $uw$ nor $vw$ is an edge. Since heavy edges are not contained in triangles,
\[
T_1=\sum_{uv\text{ heavy}}(n-d_u-d_v).
\]
Note that each vertex $v$ is incident with $a_v$ heavy edges, yielding
\[
T_1=e_h(n-2)-\sum_v a_v (d_v-1)=e_h(n-2)-\sum_v a_v d_v + 2e_h=e_hn-\sum_v a_v d_v.
\]

Let $T_2$ be the number of ordered pairs $(uv,w)$ such that $uv$ is a light edge and neither $uw$ nor $vw$ is an edge, then
\[
T_2=\sum_{uv\text{ light}}(n-d_u-d_v+d_{u,v}).
\]
Note that each $v$ is incident with $b_v$ light edges. 
Recalling that $m$ is the number of triangles in $G$, we have
\[
T_2=e_\ell(n-2)-\sum_v b_v (d_v-1)+3m=e_\ell(n-2)-\sum_v b_v d_v+2e_\ell+3m=e_\ell n-\sum_v b_v d_v+3m.
\]

For a vertex $w\in V(G)$, the number of edges inside $N(w)$ equals to the number of triangles containing $w$, while the number of edges inside $V(G)\setminus N(w)$ is counted by the triples $T_1$ and $T_2$.
As $G$ is a bad graph, for every $w$, the cut $(N(w),V(G)\setminus N(w))$ contains at least $n^2/18$ edges. 
Summing over every $w$ yields
\begin{equation}\label{eq: sum}
\frac{n^3}{18}
\le
\frac{3}{2}m
+ \underbrace{e_hn-\sum_v a_v d_v}_{T_1}
+ \frac{1}{2}\underbrace{\left(e_\ell n-\sum_v b_v d_v+3m\right)}_{T_2}.
\end{equation} The term $3m/2$ is there, because every triangle is counted three times but every edge of it is light, therefore has weight $1/2$.

Rewriting~\eqref{eq: sum} with  
$a_v=d_v^w-b_v/2$, 
$d_v=d_v^w+b_v/2$
and $e^w=e_h+e_\ell/2$ gives
\begin{equation}\label{eqn: simplesum}
    \frac{n^3}{18}
\le
3m + e^w\cdot  n - \sum_v (d_v^w)^2 - \frac{1}{2}\sum_v b_v d_v^w.
\end{equation}
Using the Cauchy--Schwarz Inequality and $\sum_v d_v^w=2e^w$, we have
\[
\sum_v (d_v^w)^2 \ge \frac{4(e^w)^2}{n}.
\]
Noticing that $d_v^w= a_v+b_v/2\geq b_v/2$, we have
\begin{equation}
\frac{n^3}{18}
\le
3m + e^w  n - \frac{4(e^w)^2}{n} - \frac{1}{4}\sum_v b_v^2
-\frac{1}{2}\sum_v a_v b_v
\le
3m + e^w  n - \frac{4(e^w)^2}{n} - \frac{1}{4}\sum_v b_v^2.
\end{equation}
Since every triangle contributes to exactly three terms in $\sum_v b_v^2$, and the graph induced by $B_v$ is triangle-free and thus contains at most $b_v^2/4$ edges, we have
\[
3m \le \sum_v \frac{b_v^2}{4},
\]
yielding
\[
\frac{n^3}{18}
\le
e^w n - \frac{4(e^w)^2}{n}.
\]
Let $x=e^w/n^2$. Dividing by $n^3$ gives
\[
\frac{1}{18}\le x-4x^2,
\]
which is equivalent to
\[
(12x-1)(6x-1)\le 0.
\]
Thus $x\le 1/6$, implying $e^w \leq n^2/6$. 
\end{proof}

\section{Flag algebra proof of~\Cref{wk5up}}\label{sec:FA}

In this section we prove Lemma~\ref{lemma:fa} using flag algebras, a strengthening of the lemma will be proved in the Appendix.

\begin{lemma}\label{lemma:fa}
    For every convergent sequence  $(G_n)_n$ of bad graphs, 
    the density of heavy edges in $G_n$ converges to zero.
\end{lemma}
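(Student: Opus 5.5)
We work in the flag-algebra framework for graphs whose edges carry one of two colours, heavy or light, with non-edges as a third colour. The class is defined by three forbidden configurations: a $K_4$ in the underlying graph, a triangle containing a heavy edge, and a triangle... in fact only the first two are forbidden. A convergent sequence of bad graphs has a limit $\phi$. This limit is a homomorphism on the flag algebra of these $3$-edge-coloured complete graphs, and it vanishes on every type containing a forbidden pattern. Let $h=\phi(\text{heavy edge})$. The goal is to show $h=0$.

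The badness condition has to be turned into linear constraints on $\phi$. Every cut has weight at least $n^2/18$, which means $e^w-B\ge n^2/18$ for every bipartition. We will use only a few explicit cuts, matching the remark in the introduction that the proof uses ``only three simple cuts''. They are defined through a rooted vertex, or a rooted edge, as follows.
\begin{enumerate}[label=(\alph*)]
\item The cut $(N(v),V\setminus N(v))$ for a single root $v$. Averaged over $v$, the weight left inside the parts is a degree-$3$ flag expression: $\tfrac32$ times the triangle density plus the weighted density of edges spanned by non-neighbours. This is exactly the quantity behind \eqref{eq: sum}.
\item The cut $(A_v\cup\ldots, \ldots)$ given by a root vertex together with its heavy and light neighbourhoods. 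For example, put $B_v$ on one side and $A_v\cup\{v\}$ on the other, with the remaining vertices placed suitably.
\item A cut defined by a rooted heavy edge $uv$, say $(N(u),N(v)\cup \text{rest})$. It is designed to exploit heavy edges directly.
\end{enumerate}
For each cut, the non-cut weight, averaged over the root, is at least $1/18$ times the appropriate normalisation. This gives inequalities $\phi(C_i)\ge \tfrac{1}{18}$ for explicit elements $C_i$ of the algebra on at most $4$ or $5$ vertices. Lemma~\ref{lemma:heavy} supplies one further constraint, $\phi(\text{weighted edge density})\le 1/3$ in density normalisation.

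The plan is then to exhibit a certificate. We look for nonnegative multipliers $\lambda_i$ and positive semidefinite matrices $Q_\tau$ over types $\tau$ on $1$, $2$ or $3$ vertices, such that, on $N$-vertex graphs with $N$ around $6$ or $7$,
\[ c\cdot(\text{heavy edge}) + \sum_i\lambda_i\Bigl(C_i-\tfrac1{18}\Bigr) + \sum_\tau \bigl\llbracket F_\tau^{T}Q_\tau F_\tau\bigr\rrbracket_\tau \le 0 \]
holds coefficientwise for every admissible graph, with some $c>0$. Applying $\phi$ then gives $c\,h\le 0$, hence $h=0$. The certificate would be found by solving the SDP numerically and then rounding to exact rationals. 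In the rounding step the $Q_\tau$ have to be projected onto the orthogonal complement of the kernel that is forced by the near-extremal constructions.

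The main obstacle is tightness. Two different families attain $1/18$. The first is the $K_4$-free all-light graphs, whose extremal case is the balanced complete tripartite graph, as in Sudakov's bound. The second is the degenerate heavy regime: triangle-free graphs, which satisfy the bound comfortably by the known bound $n^2/23.5$. Because the first family is tight, the certificate cannot have slack there, and it has to become exact on the tripartite limit.

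To handle this, we first check which of the candidate cuts are tight on $K_{n/3,n/3,n/3}$ with all edges light. Then we choose the cuts and the types $\tau$ so that the quantities vanishing on that limit span the needed kernel. We also plan to add the heavy-edge-rooted cut (c): it has strict slack whenever the heavy density is positive, and that slack is what should produce the positive coefficient $c$. If the SDP value is only asymptotically $0$, we would increase $N$ or add a fourth cut, for instance a random bipartition of $B_v$.
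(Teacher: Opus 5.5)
\textbf{Verdict: genuine gap.} Your overall strategy matches the paper's: model bad graphs with non-edges, light edges and heavy edges, turn specific cuts into flag inequalities, and find an SDP certificate forcing the heavy-edge density to be $0$. The problem is that, for a lemma whose proof \emph{is} such a certificate, you supply no certificate and not even a fully specified program. Cut (b) is undefined (``remaining vertices placed suitably''), and the flag size and the types are left open. You also explicitly allow that the SDP value may fail to be exactly $0$, and then nothing follows, since a numerical bound such as $h\le 10^{-6}$ does not give $h=0$. So the decisive step is a hope rather than an argument, and your constraint set is not the one known to close.

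The program the paper certifies is concrete:
\begin{itemize}
\item It uses $5$-vertex flags.
\item It forbids the three triangles containing a heavy edge and the all-light $K_4$, as in \eqref{eq:forb}.
\item The cut constraints are:
\begin{itemize}
\item \eqref{eq:cut1}, a uniformly random bipartition, which gives the \emph{lower} bound $e^w\ge(1/9-o(1))n^2$. You omit this. The upper bound from Lemma~\ref{lemma:heavy} that you add instead is different information, and is itself derived from the neighbourhood cut.
\item \eqref{eq:cut2}, kept as a rooted inequality valid for every root, not merely averaged over the root.
\item \eqref{eq:cut3}, in which $X=V\setminus(N(u)\cup N(v))$ is split uniformly at random between the two sides. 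Your cut (c) instead puts all of $X$ with $N(v)$.
\end{itemize}
\item Squares are taken over the one-vertex type and the four three-vertex types with no edge, one light edge, one heavy edge, and a light path of length two.
\end{itemize}
With these, the optimum is exactly $0$ and the rounded certificate is supplied. Nothing in your proposal shows that your altered constraint set has value exactly $0$.

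Two smaller points. First, if your $C_i$ are ordinary flag densities normalised by $\binom n2$ (as your constant $1/3$ from Lemma~\ref{lemma:heavy} suggests), the cut constant must be $1/9$, not $1/18$. With $1/18$, every balanced blow-up of $C_5$ is feasible: all its edges are heavy and every cut has weight at least $n^2/25>n^2/36$. The program's value would then be positive. Second, triangle-free graphs are not near-extremal at all; they are not even bad, by the $n^2/23.5$ bound. The only limit the certificate must be tight on is the all-light balanced complete tripartite graph.
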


A standard blow-up argument, similar to the proof of~\Cref{prop: K_4}, transfers the result to a finite fixed graph.

\begin{corollary}\label{lemma:G}
If $G$ is a bad graph, then it has no heavy edges.
\end{corollary}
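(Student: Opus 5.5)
We argue by contradiction via a blow-up, in the spirit of the proof of Proposition~\ref{prop: K_4}. Suppose $G$ is a bad graph on $n$ vertices with at least one heavy edge $uv$. For each $k$, let $G_k$ be the balanced blow-up of $G$: replace every vertex $x$ by an independent set $V_x$ of size $k$, make $V_x$ and $V_y$ complete to each other exactly when $xy\in E(G)$, and give all edges between $V_x$ and $V_y$ the weight of $xy$.

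\emph{Step 1: $G_k$ is again a $K_4$-free graph whose weights obey the rule.} Because the classes $V_x$ are independent, every clique of $G_k$ uses at most one vertex per class and hence projects to a clique of $G$. So $G_k$ is $K_4$-free. For the same reason, an edge of $G_k$ between $V_x$ and $V_y$ lies in a triangle of $G_k$ if and only if $xy$ lies in a triangle of $G$. Therefore the weights we inherited are exactly the weights prescribed in Theorem~\ref{wk5up}: light on edges in triangles, heavy otherwise, and no triangle of $G_k$ contains a heavy edge. In particular, the edges between $V_u$ and $V_v$ are heavy, so $G_k$ has at least $k^2$ heavy edges. This is a positive fraction $1/n^2$ of all pairs in $G_k$, and this fraction does not depend on $k$.

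\emph{Step 2: every cut of $G_k$ still has weight at least $(kn)^2/18$.} This is the main obstacle, since a cut of $G_k$ may split the classes $V_x$. The key observation is that, for a fixed assignment of all other vertices, the weight of non-cut edges is linear in the side chosen for a single vertex of $V_x$. Moreover, vertices in the same class have identical weighted neighbourhoods and no edges among themselves. So we can move the vertices of a split class, one at a time, all to the side that is no worse for them, and the objective never improves. Repeating this class by class yields a cut that is constant on every class, with weight of removed edges at most that of the original cut. Such a cut is exactly the blow-up of a cut of $G$, and it removes $k^2$ times the weight removed in $G$, which is at least $k^2n^2/18$. Hence $G_k$ is bad on $kn$ vertices.

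\emph{Step 3: contradiction.} By compactness, the sequence $(G_k)_k$ has a convergent subsequence, and in fact it converges to the graphon of $G$. By Lemma~\ref{lemma:fa}, the density of heavy edges in $G_k$ must tend to zero. But by Step~1 it stays at least $1/n^2>0$, a contradiction. Hence $G$ has no heavy edges.
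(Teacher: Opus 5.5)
Your proof is correct and follows the paper's route. You blow $G$ up into $G_k$, show that every cut of $G_k$ can be made constant on the classes without increasing its weight, conclude that $G_k$ is bad, and contradict Lemma~\ref{lemma:fa} because the heavy-edge density of $G_k$ stays bounded away from zero; the paper gets the class-respecting reduction from a minimum-weight cut with fewest split clusters and a twin-vertex exchange, while you move split classes one at a time to the no-worse side, which uses the same linearity observation. Your Step 1 also spells out what the paper leaves implicit, though the paper's definition of bad only requires that no triangle contain a heavy edge, which your ``lies in a triangle if and only if'' observation already gives, so the claim that the inherited weights are ``heavy otherwise'' is not needed.
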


\begin{proof}[Proof of Corollary~\ref{lemma:G} assuming Lemma~\ref{lemma:fa}]
Let $G$ be an $n$-vertex bad graph.
Let $G_k$ be obtained from $G$ by replacing each vertex of $G$ by an independent set of $k$ vertices, then $G_k$ has $nk$ vertices.
Each cut that was in $G$ yields a cut in $G_k$ of weight $(nk)^2/18$.
The sequence $(G_k)_k$ is a convergent sequence.

In order to apply Lemma~\ref{lemma:fa} we need to verify that $G_k$ does not have a cut of weight less than $(nk)^2/18$.
Call $k$ vertices in $G_k$ that were obtained from one vertex in $G$ a cluster.
Let $(A,B)$ be a bipartition of vertices of $G_k$ of the smallest weight such that the number of clusters intersecting both $A$ and $B$ is minimized.
If each cluster is entirely contained in $A$ or $B$, the bipartition corresponding to a bipartition in $G$ and $G$ being bad implies that the weight is at least $(nk)^2/18$.
Let $C$ be a cluster in $G_k$ intersecting both $A$ and $B$.
Our goal is to show that at least one of the bipartitions  $(A \cup C, B\setminus C)$ and $(A \setminus C, B\cup C)$ contradicts the choice of $(A,B)$.

For a vertex $v$, denote by $w(v)$ the sum of weights of edges connecting $v$ to the  other vertices in the same part as $v$ of the bipartition. 
Notice that $w(v)$ could be interpreted as the contribution of $w$ to the weight of the cut.
Let $v_a$ be a vertex in $A \cap C$ and $v_b$ be a vertex in $B \cap C$.
If $w(v_a) > w(v_b)$, then moving $v_a$ from $A$ to $B$ decreases the overall weight of the cut, which contradicts the minimality of the cut.
By the symmetry between $A$ and $B$, we conclude $w(v_a) = w(v_b)$. In particular,
$w(v)$ is the same for every $v \in C$.
Since $C$ is an independent set, the bipartition $(A \cup C, B\setminus C)$ has the same weight as $(A,B)$ while having fewer clusters intersecting  both parts, which contradicts the choice of $(A,B)$.
\end{proof}

\begin{proof}[Proof of Lemma~\ref{lemma:fa}]
We model the problem in flag algebras using 2-edge-colored graphs.
Light edges are depicted as \Fe202 and heavy edges are depicted as \Fe203. 
No $K_4$'s and no triangles with heavy edges are modeled by forbidding the following induced flags:  

\begin{align}\label{eq:forb}
\Fe30{2 2 3},
\quad 
\Fe30{2 3 3},
\quad
\Fe30{3 3 3},
\quad
\Fe40{2 2 2 2 2 2}.
\end{align}
Since every cut in $G_n$ has weight at least $n^2/18$, the following particular three types of cuts have weight $n^2/18$ and give inequalities that can be used in flag algebras calculations.  
Since densities of edges are scaled  by $\binom{n}{2}$ rather than $n^2$, we use $1/9$ for the weight of a cut which corresponds to $n^2/18$.

(1) Partition the vertices randomly into two parts. 
    \begin{align}\label{eq:cut1}
\frac19 \leq \frac12\left( \frac12 \Fuu2 + \Fuu3 \right)
    \end{align}

    (2) Fix a vertex $v$ and partition the remaining vertices into two classes: neighbors of $v$ and non-neighbors of $v$.
     \begin{align}\label{eq:cut2}
 \frac19  \leq   \Fluu113 + \frac12 \Fluu112 + \frac12 \Fluu222 
    \end{align}

    (3)  Fix a heavy edge $uv$.
    Notice $N(u) \cap N(v) = \emptyset$.
    Let $X := V(G) \setminus (N(u) \cup N(v))$.
    Let $Y$ be a subset of $X$ of size $\lfloor|X|/2\rfloor$ sampled uniformly at random.
    Partition the vertices of $G_n$ into 
    $A = N(u) \cup Y$, $B = N(v)\cup  (X \setminus Y)$. 
    The expectation over $Y$ gives the following inequality in flag algebras.
    
    \begin{align}\label{eq:cut3}
 \frac19  \leq  &~ \frac12 \Flluu322112 + \frac12 \Flluu311222 + \frac12 \Flluu331113 + \frac14 \Flluu331112 + \frac12 \Flluu321113 + \frac14 \Flluu321112 \\ \nonumber
 &+ \frac12 \Flluu311313 + \frac14 \Flluu311312 + \frac12 \Flluu311213 + \frac14 \Flluu311212 + \frac12 \Flluu311113 + \frac14 \Flluu311112 
    \end{align}

Flag algebras calculation on $5$ vertices
combining 
\eqref{eq:forb},
\eqref{eq:cut1},
\eqref{eq:cut2}, 
\eqref{eq:cut3},
and squares of flags of five types
\Fe11{}, \Flll111, \Flll112, \Flll113 and \Flll122
implies
\[
 \Fuu3 = 0.
\]
Since the particular coefficients the squares and not particularly enlightening, the coefficients for the calculation are available as supplemental files in machine friendly format as well as in \LaTeX{} at \oururl.  
Note that in the non-weighted setting~\cite{erdos1988makebi, MR2359832}, similar cuts were used.
\end{proof}

\begin{proof}[Proof of~Theorem~\ref{wk5up}.]
Let $G$ be a $K_4$-free $n$-vertex weighted graph, where the weight of every edge in a triangle is $1/2$, the weight of the  other edges is $1$, and the weight of non-edges is $0$.
If $G$ has a cut of weight at most $n^2/18$, then we are done, hence we  may
assume that every cut in $G$ has weight at least $n^2/18$. By Lemma~\ref{lemma:G}, $G$ has no edges of weight $1$, 
hence every edge of $G$ has weight $1/2$.

Since $G$ is $K_4$-free, Theorem~\ref{thm:sudakov}
implies that $G$ can be made bipartite by removing at most $n^2/9$ edges.
Since each edge has weight $1/2$, this gives a cut of weight at most $n^2/18$.
\end{proof}

\section*{Acknowledgment}
The authors would like to thank Ramón Iván García Alvarez, Florian Pfender and Jan Volec for discussions during the early stages of this project.
The first and second author started discussing this problem at AIM SQuaRE Workshops ``Crossing numbers of complete and complete bipartite graphs''. 
This work used the computing resources at the Center for Computational Mathematics, University of Colorado Denver, including the Alderaan cluster, supported by the National Science Foundation award OAC-2019089.

\bibliographystyle{plainurl}
\bibliography{references}

\section*{Appendix}

The purpose of this Appendix is to describe a strengthening of Lemma~\ref{lemma:fa} by proving a stability version. 
The complete balanced tripartite graph with every edge having weight $1/2$ requires a cut of weight $n^2/18$.
As this graph has no heavy edges, a natural stability result would be a better upper bound, which depends on the number of heavy edges.
More precisely, assume that the weighted graph has $h$ heavy edges. We aim to find a function $f:\mathbb{N} \to \mathbb{R}^+$ such that the upper bound  $n^2/18 - f(h)$ holds.

\begin{theorem}\label{lemma:fah}
    Let $(G_n)_n$ be a convergent sequence of graphs with light and heavy edges.
    If for all $n$  we have $G_n$ is $K_4$-free, triangles in $G_n$ contain no heavy edges and every cut in $G_n$ has weight at least $n^2/18 - 0.008h$, where $h$ is the number of heavy edges in $G_n$, then 
    the density of heavy edges in $G_n$ converges to zero.
\end{theorem}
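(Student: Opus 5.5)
The plan is to repeat the flag algebra computation from the proof of Lemma~\ref{lemma:fa}, with every cut inequality perturbed by the heavy-edge term. In the limit, the hypothesis ``every cut has weight at least $n^2/18-0.008h$'' becomes a perturbed inequality once we normalize by $\binom n2$. Writing $\rho_3=\Fuu3$ for the limiting density of heavy edges, we have $h\approx \rho_3\binom n2$. So $n^2/18-0.008h$ becomes $\frac19-0.008\,\rho_3$ in the $\binom n2$ normalization.

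The first step is to rewrite the three cut inequalities \eqref{eq:cut1}, \eqref{eq:cut2} and \eqref{eq:cut3} with left-hand side $\frac19-0.008\Fuu3$ instead of $\frac19$. The rooted versions \eqref{eq:cut2} and \eqref{eq:cut3} are first averaged (unlabelled) before the term $\Fuu3$ is subtracted.

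Cut \eqref{eq:cut3} needs care, since it is only defined when a heavy edge $uv$ exists. I will use it in the multiplied form: multiply the rooted inequality by the indicator that the root pair is a heavy edge and then average. This gives an inequality of the form
\[
\Big(\tfrac19-0.008\,\Fuu3\Big)\Fuu3 \le \text{(unlabelled average of the right side of \eqref{eq:cut3})}.
\]
This is a degree-two inequality in densities, which is fine for flag algebras since all quantities can be expressed as $5$-vertex densities. Similarly, \eqref{eq:cut1} and \eqref{eq:cut2} may be multiplied by nonnegative flags when that helps the SDP.

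The second step is to set up the SDP on $5$-vertex $2$-edge-colored graphs. The forbidden configurations \eqref{eq:forb} are the same, and the square types are again \Fe11{}, \Flll111, \Flll112, \Flll113 and \Flll122. The target is to certify $\Fuu3\le 0$, that is, to find nonnegative multipliers $c_i$ for the cut inequalities and positive semidefinite matrices $Q_\sigma$ such that
\[
-\Fuu3 = \sum_\sigma \llbracket Q_\sigma\rrbracket_\sigma + \sum_i c_i(\text{cut}_i) + (\text{nonnegative combination of 5-vertex densities}).
\]
The proof of Lemma~\ref{lemma:fa} already gives such a certificate with slack in the $\Fuu3$ direction. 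The perturbation adds terms $0.008c_i\Fuu3$, or $0.008\Fuu3^2$ terms for the multiplied cut. So it suffices that the old certificate proves $\Fuu3$ is at most a combination of the cut right-hand sides with enough margin. Concretely, I will rerun the SDP maximizing $\lambda$ in ``$\lambda\Fuu3\le$ certificate'' and check that $\lambda>0.008\sum_i c_i$.

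The final step is rounding to an exact rational certificate and verifying it. This should make $0.008$ a concrete admissible constant, explaining its oddly specific value.

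The main obstacle is the SDP actually closing with a margin as large as $0.008$. The extremal configurations are the balanced light tripartite graph and, for the heavy part, $C_5$-like blow-ups where $\Fuu3$ is tight. Near these the certificate may have little slack, so the $0.008$ term must be absorbed by the exact coefficient structure. If the plain version fails, I would first try adding products of the cut inequalities with the type squares, i.e.\ multiplying \eqref{eq:cut2} by rooted flags, before enlarging the flag size to $6$.
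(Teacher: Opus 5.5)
This is essentially the paper's proof. The paper reruns the same $5$-vertex flag algebra program, with the same forbidden configurations \eqref{eq:forb} and the same square types, after perturbing each of \eqref{eq:cut1}--\eqref{eq:cut3} by a multiple of the heavy-edge density, and reports that the bound on the heavy-edge density is $0$. There are two small differences. First, the paper perturbs the rooted cuts in rooted form, adding $0.016\,\pi^\sigma$ of the heavy-edge density via the upward operator, so they can still be multiplied by rooted flags; this is safer than averaging first, unless you use your fallback. Second, the paper's coefficient $0.016$ gives weaker constraints than the $0.008$ you correctly get after dividing by $\binom n2$, so the paper's certificate also certifies your program.
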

\begin{proof}
We use the proof of Lemma~\ref{lemma:fa}, while modifying \eqref{eq:cut1}, \eqref{eq:cut2} and \eqref{eq:cut3} by adding 0.016 of the density of heavy edges to the right-hand side.
The entire program is the following:
\[
(Ph)
\begin{cases}
    \text{maximize} & \Fe20{3} \\
    \text{subject to} &
  1  / 9  \leq  \frac{1}{2}\times  \Bigg(  \frac{1}{2} \Fe20{2} + \Fe20{3} \Bigg) + 0.016 \Fe20{3}   \\
  & 
  1  / 9  \leq  \Fe31{1 1 3} + \frac12 \Fe31{1 1 2} + \frac12 \Fe31{2 2 2} + 0.016\times \pi^\sigma\Bigg(\Fe20{3} \Bigg) 
 \\
  &  1  / 9  \leq  \frac12 \Fe42{3 2 2 1 1 2} + \frac12 \Fe42{3 1 1 2 2 2} + \frac12 \Fe42{3 3 1 1 1 3} + \frac14 \Fe42{3 3 1 1 1 2} + \frac12 \Fe42{3 2 1 1 1 3} + \frac{1}{4} \Fe42{3 2 1 1 1 2} + \frac12 \Fe42{3 1 1 3 1 3}  + \\ & + \frac{1}{4} \Fe42{3 1 1 3 1 2} + 
   \frac12 \Fe42{3 1 1 2 1 3} + \frac14 \Fe42{3 1 1 2 1 2} + \frac12 \Fe42{3 1 1 1 1 3} + \frac14 \Fe42{3 1 1 1 1 2} + 0.016\times \pi^\sigma\Bigg(\Fe20{3}\Bigg)  
  \\
  & 
  0 = \Fe30{2 2 3} = 
\Fe30{2 3 3}
=
\Fe30{3 3 3}
=
\Fe40{2 2 2 2 2 2},
\end{cases}
\]
where $\pi^\sigma$ is the upward operator, see Razborov~\cite{Razborov}.
The upper bound on the program $(Ph)$ is 0.
The calculation is provided in a similar form as for Lemma~\ref{lemma:fa}.
\end{proof}

By adding additional 215 cuts to the program $(Ph)$, it is possible to improve the bound in Lemma~\ref{lemma:fah} from $n^2/18 - 0.008h$
to
$n^2/18 - 0.025h$. 
We have not tried to push the bound further.

Both calculations are available at \oururl.

\end{document}